\newcommand{\RR}{\mathds{R}}
\newcommand{\CC}{\mathds{C}}
\newcommand{\norm}[1]{\|#1\|}
\newcommand{\abs}[1]{\left| #1 \right|}
\newcommand{\bee}{\begin{equation}}
\newcommand{\eee}{\end{equation}}
\newcommand{\q}{\quad}
\newcommand{\qq}{\quad\quad}
\newcommand{\nf}{\infty}
\newcommand{\al}{\alpha}
\newcommand{\ga}{\gamma}
\newcommand{\de}{\delta}
\newcommand{\De}{\Delta}
\newcommand{\ve}{\varepsilon}
\newcommand{\la}{\lambda}
\newcommand{\si}{\sigma}
\newcommand{\vp}{\varphi}
\newcommand{\Om}{\Omega}
\newcommand{\li}{L^{\infty}}
\newcommand{\loc}{\textup{loc}}
\newcommand{\real}{\textup{Re}\, }
\newcommand{\imag}{\textup{Im}\, }
\newcommand{\f}{\frac}
\newcommand{\lab}{\label}
\DeclareMathAlphabet\gothic{U}{euf}{m}{n}
\newtheorem{theorem}{Theorem}
\newtheorem{lemma}[theorem]{Lemma}
\newtheorem{proposition}[theorem]{Proposition}
\newtheorem{definition}[theorem]{Definition}
\newtheorem{remark}[theorem]{Remark}
\numberwithin{theorem}{section}
\numberwithin{equation}{section}
\date{}
\begin{document} 
\title{Interpolation for analytic families of multilinear operators on metric measure spaces}
\author{ Loukas Grafakos and El Maati Ouhabaz }
\maketitle     

\begin{abstract}
Let $(X_j, d_j, \mu_j)$, $j=0,1,\dots , m$ be  metric measure spaces.  
Given $0 < p^\kappa \le \infty$ for $\kappa = 1, \dots, m$ and  an analytic family of multilinear operators 
\[
T_z: L^{p^1}(X_1)\times\cdots \times L^{p^m}(X_m) \to L^1_{\loc}(X_0), 
\]
for $z$ in the complex unit strip, we prove a  theorem in the spirit of  Stein's complex interpolation for   analytic  families. Analyticity and our admissibility condition are defined in the weak 
(integral) sense  and relax   the  pointwise  definitions  given  in \cite{GrafakosMastylo}. 
Continuous functions with compact support are natural dense subspaces of   
Lebesgue spaces over  metric measure spaces and we assume the operators $T_z$ 
 are initially defined on them.  
Our main lemma concerns the approximation of  continuous functions with compact support    
by similar functions  that depend analytically in an auxiliary parameter $z$.  
An application of the main theorem 
concerning  bilinear estimates for Schr\"odinger operators on $L^p$   
is  included.
\end{abstract}

\vspace{5mm}

\noindent
{\bf Keywords}:  multilinear operators, analytic families of operators, interpolation, bilinear estimates for Schr\"odinger operators.

\vspace{15mm}

\noindent
{\bf Home institutions:}    \\[3mm]
\begin{tabular}{@{}cl@{\hspace{10mm}}cl}
 & Department of Mathematics  & 
   & Institut de Math\'ematiques de Bordeaux \\
& University of Missouri   & 
  & Universit\'e de Bordeaux, UMR 5251,  \\
& Columbia, MO 65203 & 
  &351, Cours de la Lib\'eration. 33405 Talence  \\
& USA & 
  &  France \\
& grafakosl@missouri.edu  & 
 & Elmaati.Ouhabaz@math.u-bordeaux.fr \\[8mm]
\end{tabular}



\section{Introduction}\label{sec1}

Interpolation between function spaces plays a fundamental role in many areas of analysis 
such as harmonic, complex, and functional analysis, as well as in  PDE.  The most common interpolation 
theorems are the ones of Riesz-Thorin, Marcinkiewicz, and Stein.  Unlike the first two results which    concern a single linear operator, Stein's  interpolation theorem for   analytic  families of linear operators is formulated for families which vary analytically in an auxiliary parameter. In this way it covers and supersedes the case of a single operator, it is more flexible, and finds a variety of applications.  

In recent years, there is an increasing interest for multilinear analysis. In this setting, it is of interest to have interpolation theorems analogous to those for linear operators. The primary purpose of this article is to prove  a version of Stein's interpolation
 theorem for multilinear operators. 
Our interpolation result   is given in the context of Lebesgue spaces over
 metric measure spaces $(X_j, d_j, \mu_j) $, $j=0,1,\dots , m$ in which   balls have finite measure.  
 Such spaces have  nice subspace of dense functions such as the spaces of 
 continuous functions with compact support $\mathcal C_c(X_j)$. 
 We consider a family of multilinear operators $T_z$ for $z$ in the unit strip ${\mathbf S}=\{z\in \CC:\,\, 0 \le \textup{Re } z \le 1\}$. This family is analytic in 
 the following  sense: For every
 $f^j $ in $\mathcal C_c(X_j)$, $j=1,\dots , m$ and $w$ bounded  
 function 
with compact support on $X_0$   the mapping   
\begin{equation}\label{eq00}
z\mapsto  \int_{X_0}  T_z(f^1,\dots, f^m)\,  w \,d\mu_0
\end{equation}
is analytic in
${\mathbf S}$ and
continuous on its closure. The operators $T_z$ are taking values in the space of 
locally integrable functions on $X_0$ and satisfy the admissibility condition:  there exists a constant $\ga$ with
$0\le \ga<\pi$ and an $s \in [1, \infty]$  
 such that for any  $f^j$ in $\mathcal C_c(X_j)$ and  
 every compact subset $K$ of $X_0$ 
 there is   a constant $ C(f^1,\dots, f^m,K )$ such that
\begin{equation}\label{eq01}
\log \bigg[ \int_K | T_z(f^1,\dots, f^m)|^sd\mu_0 \bigg]^{1/s}  \le C(f^1,\dots, f^m,K )\, e^{ \ga |\textrm{Im}\,z |}, 
\q  z\in \overline{\mathbf S}.
\end{equation}
The initial estimates are of  the form 
\[
\big\|T_{j+ iy}(f^1,\dots, f^m)\big\|_{L^{q_j}(X_0) }   \le  \,\, B_j M_j (y )
\prod_{\kappa=1}^m \|f^\kappa\|_{L^{p_j^\kappa}(X_j)},\quad j\in \{0,1\},\,\,\, y\in \mathbb R,
\]
where $B_j>0$ and  $ 0<p_j^\kappa\le \infty$.  Then we prove that for $\theta \in (0,1)$ and 
\begin{equation*} \frac{1}{ p^\kappa}=\frac{1-\theta}{ p_0^\kappa}+
\frac{\theta  }{p_1^\kappa} \quad\quad\text{and}\quad\quad
\frac{1}{ q}=\frac{1-\theta}{ q_0}+\frac{\theta  }{ q_1}\, , 
\end{equation*}
the multilinear operator 
\[
T_\theta: L^{p^1}(X_1) \times \cdots \times L^{p^m}(X_m) \to L^q(X_0)
\]
is bounded with an appropriate estimate on its norm. We refer to Theorem \ref{thm-multilinear}  for the full statement. The reader easily recognizes the resemblance to the Stein's complex interpolation in the linear context. 

In the multilinear setting, this type of results already appeared in the work  of  Grafakos and   
Masty\l{}o \cite{GrafakosMastylo}.  The theorem in \cite{GrafakosMastylo} was proved   in the  more general setting of quasi-Banach spaces. However the analyticity and admissibility required  there  were  in the pointwise sense. The admissibility used there is that for every 
$(f^1,\dots, f^m) \in L^{p^1}(X_1)\times \cdots  \times L^{p^m}(X_m)$ and a.e. $y \in X_0$, the mapping 
\[ z \mapsto T_z(\varphi_1,\dots, \varphi_m)(y) \]
is of admissible growth.  Unlike the integral condition \eqref{eq01}, the pointwise admissibility is not easy to check when the operators are not  explicit. 
The extension to more general rearrangement invariant spaces over $X_j$ is not important in our applications and is not pursued here. 

Section \ref{sec3} is devoted  to some bilinear estimates  for Schr\"odinger operators. We consider 
$L = -{\rm div}( A \nabla) + V$, where $A = (a_{kl})_{1\le k, l \le n}$ is a symmetric matrix with real-valued and bounded measurable entries and $V$ is a nonnegative locally integrable potential on $\RR^n$. We prove that for every $p \in (1,\infty)$ and $\alpha, \beta \in [0, \infty)$, there exists a constant $ C(\alpha, \beta, \gamma, p)$, independent of the dimension $n$,  such that
 \begin{equation*}
\int_0^\infty \int_{\RR^n} | {\bf \Gamma} L^\alpha e^{-tL} f (x)\cdot {\bf \Gamma} L^\beta e^{-tL}g(x) |\,  dx\, t^{\alpha + \beta} dt\le C(\alpha, \beta, \gamma, p)  \| f \|_{L^p} \| g \|_{L^{p'}}.
\end{equation*}
  Here ${\bf \Gamma}$ is either $\nabla$ or multiplication by $\sqrt{V}$. The result for $\alpha = \beta = 0$ is due to  Dragicevic and   Volberg \cite{DraVolb1}. Our  proof relies heavily on their result as well as the  interpolation theorem applied to an appropriate analytic family of bilinear operators. 

Finally, we provide the reader with some useful results on log-subharmonic functions  in the Section  \ref{sec4} (Appendix).\\

\noindent{\bf Acknowledgements.} The research of L. Grafakos is partially supported by the  Simons  Foundation Grant  624733 and by the Simons Fellows award  819503. 
 The research of E.~M. Ouhabaz is partly supported by the ANR project RAGE,  ANR-18-CE-0012-01. 

\section{Some preliminary facts}\label{sec2}

Throughout this section, 
  $(X,d,\mu)$ will be a metric space equipped with  a metric $d$ and with a positive measure $\mu$ on a $\si$-algebra $\mathcal A$ of subsets of $X$. 
Let $x\in X$ and $r>0$. A ball  $B(x,r)$  is the set of points $
B(x,r)= \{y\in X:\,\, d(x,y)<r\}.$
We assume   the following mild assumptions   on $(X,d,\mu)$:

\begin{enumerate}
\item[(i)] $\mu(B(x,r))<\nf$ for any $x\in X$ and $r>0$,
\item[(ii)] $\mu$ is a regular measure with respect to the topology of $X$, i.e., for any $A\in \mathcal A$ with $\mu(A)<\nf$ one has
\begin{eqnarray*}
\mu(A) & =&  \sup\{\mu(K):\,\, \textup{$K$ is compact subset of $A$} \big\} \\
\mu(A) & =&  \inf\{\mu(U):\,\, \textup{$U$ is open subset of $X$ and contains $A$} \big\}.  
\end{eqnarray*}
\end{enumerate}



Simple functions on $X$ have the form:
$\sum_{j=1}^N \la_j \chi_{A_j},$
where $\la_j$ are complex numbers and $A_j\in \mathcal A$ are pairwise disjoint 
and satisfy $\mu(A_j)<\nf$. 
Simple functions are dense in $L^p(X)$ for any 
$0<p<\nf$ (as   $(X,\mu)$ is   $\si$-finite).  Moreover, as the sets $A_j$ can be approximated 
from below by compact sets, simple functions with $A_j$  being   
  compact sets are dense in  $L^p(X)$ for $0<p<\nf$.  
We denote by $L^1_{\textup{loc}}(X)$ the space of all measurable functions on $X$ that are integrable over any compact subset of $X$. We also denote by $\mathcal C_c(X)$  the space of all continuous functions with compact support in $X$. 
The subsequent lemma   guarantees the abundance of such functions. 

\begin{lemma}\label{lem-U}
Given $K$ compact and $U$ open subsets of $X$ such that $K\subset U$, there exists  $h\in \mathcal C_c(X)$ such that 
\[
\chi_{K} \le h \le \chi_{ U} .
\]
\end{lemma}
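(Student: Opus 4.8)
The plan is to construct $h$ as a function of the metric distance to $K$, suitably cut off so that its support stays inside $U$. The only structure available is the metric $d$ and the regularity of $\mu$, so the proof should be purely metric-topological; the measure plays no role here beyond guaranteeing that the statement is not vacuous.

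First I would dispose of the trivial case $K = \emptyset$ (take $h \equiv 0$) and assume $K \neq \emptyset$. Since $K$ is compact and $U$ is open with $K \subset U$, I claim the distance from $K$ to the complement $U^c$ is strictly positive: set $\delta := \dist(K, U^c) = \inf\{d(x,y) : x \in K,\ y \in U^c\}$, with the convention $\delta = +\infty$ if $U = X$. If $U \neq X$ and $\delta = 0$, one could pick sequences $x_n \in K$, $y_n \in U^c$ with $d(x_n,y_n) \to 0$; by compactness of $K$, passing to a subsequence, $x_n \to x \in K \subset U$, hence $y_n \to x$ as well, but $U^c$ is closed, forcing $x \in U^c$, a contradiction. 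So $\delta > 0$. Now fix any $r$ with $0 < r < \delta$ (e.g. $r = \delta/2$, and if $\delta = \infty$ just take $r = 1$) and define
\[
h(x) = \max\Big(0,\ 1 - \tf{1}{r}\, \dist(x, K)\Big), \qquad \dist(x,K) = \inf_{y \in K} d(x,y).
\]

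It remains to check the three required properties. Continuity: $x \mapsto \dist(x,K)$ is $1$-Lipschitz on $X$ (a standard triangle-inequality argument), hence continuous, and $t \mapsto \max(0, 1 - t/r)$ is continuous, so $h$ is continuous. The bounds: on $K$ we have $\dist(x,K) = 0$, so $h(x) = 1 = \chi_K(x)$; everywhere $0 \le h \le 1$; and if $x \notin U$, i.e. $x \in U^c$, then $\dist(x,K) \ge \delta > r$, so $1 - \dist(x,K)/r < 0$ and $h(x) = 0$, giving $h \le \chi_U$. Compact support: $h$ vanishes outside the closed set $F := \{x : \dist(x,K) \le r\}$, so $\textup{supp}\, h \subset F \subset U$; to see $F$ is compact, note $K$ is totally bounded (being compact) and $F$ is contained in an $r$-enlargement of $K$, hence also totally bounded, while $F$ is closed — however total boundedness plus closedness gives compactness only if the ambient space is complete, which we have not assumed. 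The cleaner route, which I would take, is: cover $K$ by finitely many balls $B(x_i, r/2)$, $i = 1,\dots, N$, with $x_i \in K$ (possible by compactness); then $F \subset \bigcup_i \overline{B(x_i, 2r)}$, but closed balls need not be compact either. So instead I would invoke compactness of $K$ directly through a finite subcover argument applied to $F$ itself: actually the simplest fix is to shrink the support by using $h(x) = \max(0, \min(1, 2 - \tf{2}{r}\dist(x,K)))$ type constructions — but to keep things honest, the support is automatically contained in $\overline{\{h > 0\}}$, and I would instead argue that $\{x : \dist(x,K) < r\}$ has compact closure by the same subsequence/compactness argument: any sequence in it has points within $r$ of $K$, and combined with compactness of $K$ one extracts a convergent subsequence. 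This last point — ensuring the support is genuinely compact and not merely bounded — is the one subtlety in an otherwise routine argument, and it is precisely here that compactness of $K$ (rather than mere closedness or boundedness) is used; I expect it to be the only step requiring care.
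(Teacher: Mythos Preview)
Your approach differs from the paper's: the paper simply invokes Urysohn's lemma for the disjoint closed sets $K$ and $X\setminus U$ to obtain a continuous $h:X\to[0,1]$ equal to $1$ on $K$ and $0$ on $X\setminus U$, and declares that this $h$ satisfies the claim. Your explicit distance-based formula achieves the same bounds $\chi_K \le h \le \chi_U$ and continuity just as cleanly, and has the minor advantage of being constructive.

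You are right, however, that compact support is the one delicate point, and your final sequential argument does not close the gap. In a general metric space the closed $r$-neighbourhood of a compact set need not be compact: take $X=\QQ$ with the usual metric and $K=\{0\}$; then $\{x\in\QQ:|x|\le r\}$ is closed, yet a sequence of rationals approaching an irrational has no subsequential limit in $\QQ$. In fact, without some form of local compactness the lemma as stated is false --- in that same example no $h\in\mathcal C_c(\QQ)$ with $h(0)\ge 1$ can exist, since continuity forces the support to contain a full rational interval, which is never compact. The paper's Urysohn argument glosses over the very same issue: Urysohn produces a continuous function, not a compactly supported one. Both proofs become correct under the mild extra hypothesis that $X$ is locally compact (which holds in all of the paper's intended applications): one first shrinks $U$ to an open $V\supset K$ with compact closure $\overline V\subset U$, and then either construction yields $h$ with $\textup{supp}\,h\subset\overline V$.
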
 

\begin{proof} 
The sets $K$ and $X\setminus U$ are closed and disjoint,  
so by Urysohn's lemma (which is applicable on metric spaces) 
there is a continuous function $h:X\to [0,1]$ 
that is equal to $1$ on 
$K$ and $0$ on $X\setminus U$.  This function $h$ satisfies the claim. 
\end{proof} 

The following result, inspired by 
\cite{CT}, allows us to approximate ${\mathcal C}_c (X)$ functions by 
   functions that are analytic in a new auxiliary variable $z$. 

\begin{lemma}\label{lem:016}
  Let    $0< p_0 \le p_1   \le    \infty$ satisfy  $p_0<\nf$,  and 
    define $p$ via $1/p=(1-\theta)/p_0+\theta/p_1$, where $0 <\theta < 1$.  
  Given $f\in {\mathcal C}_c (X)$ and   $\ve>0,$ there exist $N_\ve$ and 
  $h_j^\ve\in {\mathcal C}_c (X)$  supported in pairwise disjoint open sets
  $U_j^\ve$, $j=1,\dots, N_\ve$,   and there exist nonzero complex constants $c_j^\ve$ such that  the functions
\begin{equation}\label{EQ-Fz}
  f_{z}^{\ve}  =  \sum_{j=1}^{N_\ve} |c_j^\ve|^{\frac p{p_0} (1-z) +  
 \frac p{p_1}  z}  \, h_j^\ve
\end{equation}
satisfy 
\begin{equation}\label{newest}
\big\|f_{\theta}^{\ve}-f\big\|_{L^ {p_0}} \le  \ve, \q \qq
\begin{cases} 
  \big\|f_{\theta}^{\ve}-f\big\|_{ L^{p_1}}  \le  \ve &\q\textup{if $p_1<\nf$} \\  & \\
    \big\|f_{\theta}^{\ve} \big\|_{L^ \nf} \le \big\| f\big\|_{L^\nf} +\ve 
&\q\textup{if $p_1=\nf$,}  
\end{cases} 
\end{equation} 
and
\begin{equation}\label{newest2}
  \big\|f_{it}^{\ve}\big\|_{L^{p_0}}^{p_0} \le   \big\|f \big\|_{L^p}^p +\ve'  \, , \q
  \big\|f_{1+it}^{\ve}\big\|_{L^{p_1}} \le \big(   \big\|f \big\|_{L^p}^p 
 +\ve'\big)^{\f{1}{p_1}} \, ,
\end{equation}
where $\ve'$ depends on $\ve ,p, \|f\|_{L^p}$ and tends to zero as $\ve\to 0$.
\end{lemma}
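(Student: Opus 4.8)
The plan is to build $f_z^\ve$ by first approximating $f$ by a simple function with compact level sets, then replacing the characteristic functions by continuous bump functions supported on slightly enlarged disjoint open sets, and finally checking the three families of estimates by exploiting that on the line $\operatorname{Re}z=0$ the modulus $|c_j^\ve|^{p/p_0(1-z)+p/p_1 z}$ equals $|c_j^\ve|^{p/p_0}$ and on $\operatorname{Re}z=1$ it equals $|c_j^\ve|^{p/p_1}$, while at $z=\theta$ it equals $|c_j^\ve|$ exactly. First I would fix $f\in\mathcal C_c(X)$ with support in a compact set $L$, and use the density of simple functions with compact defining sets (noted just before the lemma) to choose $g=\sum_{j=1}^N c_j\chi_{K_j}$ with $K_j$ compact, pairwise disjoint, $c_j\neq 0$, such that $\|f-g\|_{L^{p_0}}$ and (if $p_1<\infty$) $\|f-g\|_{L^{p_1}}$ are small, and in the case $p_1=\infty$ one arranges in addition $\max_j|c_j|\le\|f\|_{L^\infty}$; since $f$ has compact support one may also take all $K_j\subset L$. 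Because the $K_j$ are compact and pairwise disjoint in a metric space, they have pairwise disjoint open neighborhoods $U_j$, and these can be taken with $\mu(U_j\setminus K_j)$ as small as we wish by outer regularity (ii); shrinking further, we keep $\bigcup U_j$ inside a fixed compact neighborhood of $L$. Then Lemma \ref{lem-U} provides $h_j^\ve\in\mathcal C_c(X)$ with $\chi_{K_j}\le h_j^\ve\le\chi_{U_j}$, and we set $c_j^\ve=c_j$, $N_\ve=N$, $U_j^\ve=U_j$, and define $f_z^\ve$ by \eqref{EQ-Fz}.

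For the estimate \eqref{newest}, note that $\frac p{p_0}(1-\theta)+\frac p{p_1}\theta=1$, so $f_\theta^\ve=\sum_j c_j h_j^\ve$, and $f_\theta^\ve-g=\sum_j c_j(h_j^\ve-\chi_{K_j})$ is supported in $\bigcup_j(U_j\setminus K_j)$ with $|f_\theta^\ve-g|\le\max_j|c_j|$ there; hence $\|f_\theta^\ve-g\|_{L^{p_0}}^{p_0}\le(\max_j|c_j|)^{p_0}\sum_j\mu(U_j\setminus K_j)$, which is $\le$ any prescribed amount once the neighborhoods are chosen small enough, and similarly for $L^{p_1}$ when $p_1<\infty$; combining with $\|f-g\|$ small and the triangle inequality (in the quasi-norm sense if $p_0<1$, absorbing the constant into the choice of tolerances) gives $\|f_\theta^\ve-f\|_{L^{p_0}}\le\ve$ and the $L^{p_1}$ bound. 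When $p_1=\infty$, since each $h_j^\ve$ takes values in $[0,1]$ and the $U_j$ are disjoint, $\|f_\theta^\ve\|_{L^\infty}\le\max_j|c_j|\le\|f\|_{L^\infty}$, which is even better than \eqref{newest} requires; strictly one only needs to retain the $\ve$-slack from the approximation step if one insists on also controlling $\|f_\theta^\ve-f\|_{L^{p_0}}$, so the stated form holds.

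For \eqref{newest2}, observe that on $\operatorname{Re}z=0$ we have $|f_{it}^\ve|=\sum_j|c_j|^{p/p_0}h_j^\ve$ (disjoint supports), so $|f_{it}^\ve|^{p_0}=\sum_j|c_j|^{p}(h_j^\ve)^{p_0}\le\sum_j|c_j|^p\chi_{U_j}$, whence $\|f_{it}^\ve\|_{L^{p_0}}^{p_0}\le\sum_j|c_j|^p\mu(U_j)$. Now $\sum_j|c_j|^p\mu(K_j)=\|g\|_{L^p}^p$, and $\|g\|_{L^p}\le\|f\|_{L^p}+(\text{small})$ by the approximation step (here one must also arrange $\|f-g\|_{L^p}$ small, which is possible by the same density statement since $p<\infty$); adding $\sum_j|c_j|^p\mu(U_j\setminus K_j)$, which we made arbitrarily small, we get $\|f_{it}^\ve\|_{L^{p_0}}^{p_0}\le\|f\|_{L^p}^p+\ve'$ for a quantity $\ve'$ that we can drive to zero, depending only on $\ve$, $p$, $\|f\|_{L^p}$. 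The case $p_1=\infty$ needs the convention that $|c_j|^{p/p_1}=1$, so $|f_{1+it}^\ve|=\sum_j h_j^\ve\le 1$ pointwise, while $(\|f\|_{L^p}^p+\ve')^{1/p_1}=1$, giving the inequality trivially; for $p_1<\infty$, $|f_{1+it}^\ve|^{p_1}=\sum_j|c_j|^{p}(h_j^\ve)^{p_1}\le\sum_j|c_j|^p\chi_{U_j}$ exactly as before, so $\|f_{1+it}^\ve\|_{L^{p_1}}^{p_1}\le\|f\|_{L^p}^p+\ve'$, which is \eqref{newest2} after taking $p_1$-th roots. The analyticity of $z\mapsto f_z^\ve$ is immediate since it is a finite sum of exponentials $|c_j^\ve|^{w(z)}h_j^\ve$ with $w$ affine and $|c_j^\ve|>0$, and continuity on $\overline{\mathbf S}$ is clear; I do not expect any obstacle there. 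The one genuinely delicate point—the \textbf{main obstacle}—is bookkeeping the several smallness requirements (on $\|f-g\|_{L^{p_0}}$, $\|f-g\|_{L^{p_1}}$, $\|f-g\|_{L^p}$, and on $\mu(U_j\setminus K_j)$) simultaneously and tracking how they feed into a single $\ve'=\ve'(\ve,p,\|f\|_{L^p})\to 0$, particularly in the quasi-Banach range $p_0<1$ where the triangle inequality carries a constant; this is routine but must be organized so that all constants depend only on the allowed quantities and not on $f$ beyond its $L^p$ norm.
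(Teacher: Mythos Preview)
Your overall strategy matches the paper's: approximate $f$ by a simple function with disjoint compact pieces, fatten the pieces to disjoint opens, insert Urysohn bumps, and read off the estimates from the disjointness and the identity $\tfrac{p}{p_0}(1-\theta)+\tfrac{p}{p_1}\theta=1$. The paper obtains the simple-function approximation via uniform continuity of $f$ (yielding an $L^\infty$ error, hence simultaneous smallness in $L^{p_0},L^{p_1},L^p$), while you invoke density directly; either works, and your remark that the quasi-Banach triangle constant just needs to be absorbed is correct.

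There is, however, one genuine slip. With your choices $h_j^\ve$ satisfies $\chi_{K_j}\le h_j^\ve\le \chi_{U_j}$, so $h_j^\ve$ is real and takes values in $[0,1]$. By the formula \eqref{EQ-Fz} one has
\[
f_\theta^\ve=\sum_j |c_j^\ve|\, h_j^\ve,
\]
not $\sum_j c_j h_j^\ve$ as you wrote; hence $f_\theta^\ve\ge 0$ and cannot approximate a general complex-valued $f\in\mathcal C_c(X)$. The fix (and this is exactly what the paper does) is to build the phase of $c_j$ into the bump: write $c_j=|c_j|e^{i\phi_j}$, let $g_j^\ve$ be the Urysohn function with $\chi_{K_j}\le g_j^\ve\le\chi_{U_j}$, and set $h_j^\ve:=e^{i\phi_j}g_j^\ve$. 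Then $|c_j|\,h_j^\ve=c_j g_j^\ve$, so $f_\theta^\ve=\sum_j c_j g_j^\ve$ and your $L^{p_0}$/$L^{p_1}$ comparison with $g=\sum_j c_j\chi_{K_j}$ goes through unchanged; the $L^\infty$ bound for $p_1=\infty$ and the computations on $\operatorname{Re}z=0,1$ are unaffected since they only use $|h_j^\ve|=g_j^\ve\le\chi_{U_j}$. Once this phase correction is made, your argument is complete and essentially identical to the paper's.
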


\begin{proof} 
Given $f\in {\mathcal C}_c(X)$  and   $ \ve>0$, let $E= \textup{supp} f 
$. 
Let $E'=\bigcup_{x\in E} B(x,1)$ and notice that in view of the compactness of $E$, the set 
$E'$ has finite measure. By the uniform continuity of $f$ there is a $\de 
$ in $(0,1)$  
such that 
\[
x,y\in X,\,\,\,\, d(x,y)<\de\implies |f(x)-f(y)|<
 \f{ \ve}{2^{ \max ( 1, \f 1{ p_0}  ) } }
  \bigg( \f{1 }{  1+  \mu(E')     } \bigg)^{    \f 1{ p_0}     } .
\]
Then we   cover the support of $f$ by finitely many balls
$B_1, \dots , B_{N_\ve'}$ of radius $\de/2$. We   find pairwise disjoint
measurable subsets $A_j$ of $B_j$ 
that   satisfy $B_1\cup \cdots \cup B_{N_\ve'}=
A_1\cup \cdots \cup A_{N_\ve}$; notice that this union contains $E$ and is contained in $E'$. 
 Suppose that  $N_\ve $ of the $A_j$ are nonempty, without loss of generality 
 assume these are the first $N_\ve$; this way we have $A_j\neq \emptyset$ 
for all $j\le N_\ve\le N_\ve'$. We now let 
$c_j^\ve = f(x_j)$, where $x_j$ is any fixed point in $A_j$.  As a 
consequence of these choices one has
\[
\big\| f- \sum_{j=1}^{N_\ve} c_j^\ve \chi_{A_j}\big\|_{L^{\nf}} \le  
 \f{ \ve}{2^{ \max ( 1, \f 1{ p_0}  ) } } \bigg( \f{1 }{  1+  \mu(E')     
} \bigg)^{    \f 1{ p_0}     }. 
\]
It follows from this that if $p_1=\nf$ then 
\begin{equation}\label{7784}
\sup_{1\le j\le N_\ve} |c_j^\ve| = 
\big\|   \sum_{j=1}^{N_\ve} c_j^\ve \chi_{A_j}\big\|_{L^{\nf}} \le  \big\| f\big\|_{\li}+ \ve ,
\end{equation}
while 
if  $p_1<\nf$ then
\begin{eqnarray*}
\big\| f- \sum_{j=1}^{N_\ve} c_j^\ve \chi_{A_j}\big\|_{L^{p_\kappa}}^{\min(1,{p_\kappa})}
   \le \Big[  \f{ \ve}{2^{ \max ( 1, \f 1{ p_0}  ) } } \bigg( \f{1 }{  1+  \mu(E')     } \bigg)^{    \f 1{ p_0}     }   
 \mu \Big(\bigcup_{j=1}^{N_\ve} A_j\Big)^{\f {1}{p_\kappa}} \Big]^{\min(1,{p_\kappa})}  
  \le   \f{\ve^{\min(1,{p_\kappa})} }{2}  ,
\end{eqnarray*}
where $p_\kappa \in\{p_0,p_1,p\}$.

By the regularity of $\mu$ we pick compact sets $K_j$ contained in $A_j$ such that $\mu(A_j\setminus K_j)<\f{\eta}2$, 
for some $\eta>0$ chosen to satisfy
\[
\max_{\kappa\in \{0,1\}} 
\bigg( \sum_{j=1}^{N_\ve} |2c_j^\ve|^{\min (1,p_\kappa)} \bigg) \eta^{ \min (1,\f{1}{ p_\kappa} )} <
 \f{\ve^{\min(1,{p_\kappa})} }{2}  .
\]
Then the compact sets $K_j$ are pairwise disjoint, so 
\[
\min_{j \neq k} \big( \textup{dist} (K_j, K_k) \big) = \rho>0.
\]
Now let 
\[
U_j'= \bigcup_{x\in K_j} B\big(x, \f{\rho}{3}\big) 
\]
and choose $U_j''$ open such that $A_j\subset U_j''$ and $\mu (U_j''\setminus A_j)<\f{\eta}2$ 
by the regularity of $\mu$. 
Then define 
\[
\hspace{1in}U_j^{\ve}= U_j'\cap U_j'', \qq\qq j=1,2,\dots, N_\ve.
\]
The sets $U_j^{\ve}$ are open and pairwise disjoint. Also each $U_j^\ve$ contains the compact set $K_j$. 
 By Lemma \ref{lem-U} we
  pick $g_j^\ve\in \mathcal C_c(X)$ with values in $[0,1]$ 
  satisfying  $\chi_{K_j} \le g_j^\ve\le \chi_{U_j^{\ve}}$. Then if $p_1<\nf$ 
by  the subadditivity of     
$\| \cdot \|_{p}^{\min (1,p)} $ we write 
\begin{eqnarray*}
\Big\| f - \sum_{j=1}^{N_\ve} c_j^\ve g_j^\ve \Big\|_{L^{p_\kappa }}^{\min (1,p_\kappa)}  
 &\le& 
\Big\| f - \sum_{j=1}^{N_\ve} c_j^\ve \chi_{A_j}  \Big\|_{L^{p_\kappa }}^{\min (1,p_\kappa)}  
+
\Big\|   \sum_{j=1}^{N_\ve} c_j^\ve ( \chi_{A_j} -g_j^\ve )\Big\|_{L^{p_\kappa }}^{\min (1,p_\kappa)}  \\
 & \le &  \f{  \ve^{\min (1,p_\kappa)}}2   + \sum_{j=1}^{N_\ve} 
 |2c_j^\ve|^{\min (1,p_\kappa)}   \eta^{ \min (1,\f{1}{ p_\kappa} )} \\
  & \le &\ve^{\min (1,p_\kappa)}  ,
\end{eqnarray*}
as the   $\chi_{A_j} -g_j^\ve $ is bounded by $2$ and supported in $U_j^{\ve}\setminus K_j$ 
which has measure at most $\eta$.  
This proves \eqref{newest} when $p_1<\nf$. 
Note that the same argument shows that 
\begin{equation}\label{samearg3}
\Big\| f - \sum_{j=1}^{N_\ve} c_j^\ve \chi_{U_j^{\ve}} \Big\|_{L^{p_\kappa }} \le \ve , 
\qq \kappa\in \{0,1\}. 
\end{equation}
We set $h_j^\ve = e^{i\phi_j^\ve} g_j^\ve$, where $\phi_j^\ve$ is the argument of the complex number $c_j^\ve$. 
Then $h_j^\ve$ is that function claimed in  \eqref{EQ-Fz}. 
Observe that  
\[  
f_{\theta}^{\ve} = \sum_{j=1}^{N_\ve} |c_j^\ve| h_j^\ve =  \sum_{j=1}^{N_\ve}  c_j^\ve  g_j^\ve 
\]
satisfies   \eqref{newest} when $p_1<\nf$; in the case $p_1=\nf$ we have
\[
|f_{\theta}^{ \ve}|\le \sum_{j=1}^{N_\ve} |c_j^\ve| \chi_{U_j^{\ve} }   
\le \sup_{j} |c_j^\ve| \le  \big\|f \big\|_{L^\nf}  +\ve
\]
by \eqref{7784}. 
Thus \eqref{newest} holds when $p_1=\nf$. We now write 
\[
\big\| f_{it}^{\ve} \big\|_{L^{p_0}}^{p_0} \le  \sum_{j=1}^{N_\ve}  | c_j^\ve |^p \mu( U_j^{\ve}  ) 
= \bigg\|\sum_{j=1}^{N_\ve} c_j^\ve \chi_{U_j^{\ve}} \bigg\|_{L^p}^p
\le \Big( \ve^{\min (1,p) } +\big\| f\big\|_{L^p}^{\min (1,p) }   \Big)^{\f p {\min (1,p)}} ,
\]
having   used \eqref{samearg3}.  

We set $\ve'=\ve^p$ if $p\le 1$ and $\ve'=(\ve+\|f\|_{L^p})^p - \|f\|_{L^p}^p$ when $1<p<\nf$. 
Then $\ve'\to 0$ as $\ve\to 0$ and this
 proves \eqref{newest2}  for $p_0$ and analogously for $p_1$  when $p_1<\nf$;  now if $p_1=\nf$ then  
$\| f_{1+it}^{\ve} \|_{L^{\infty}}\le 1$ and the right hand side of the second 
inequality in \eqref{newest2}  is equal to $1$, so the inequality is still valid.  
 \end{proof}

 Throughout this paper  ${\mathbf S} $ will denote  the  open unit strip  ${\mathbf S}=\{z\in \CC:\,\, 0<\real z<1\} $ and 
  $ \overline{{\mathbf S}}$ its closure, i.e., the closed unit strip. 
  As the boundary of $\mathbf S$ has two disjoint pieces and integration over each piece will be written 
separately, we  introduce the ``half''  Poisson kernel $\Om$ on 
$\overline{\mathbf S}\setminus\{1\}$ via:
\bee\label{PSonjstrip}
 \Omega(x,y) = \dfrac12\, \dfrac{\sin (\pi x)}{\cosh (\pi y )+ \cos (\pi x)} 
\eee
where $0\le x \le 1$ and $-\nf <y<\nf$ but $(x,y)\neq (1,0)$.  This function is nonnegative  
and satisfies 
\begin{equation}\label{FPx}
\int_{-\nf}^{+\nf} \Om(x,t) \, dt = x \qq\textup{for all $0\le x<1$.}
\end{equation}
The next result due to Hirschman~\cite[Lemma 1]{Hirschman} 
is fundamental in complex interpolation.

\begin{proposition}\label{3linesHirschman}
  Let $F $ be a  continuous  function on the closed unit strip $\overline{\mathbf S}$ such that 
  $\log |F|$    is  subharmonic  in $\mathbf S$  that   satisfies 
\bee\lab{claimedineq8887774ii}
\sup_{0\le x\le 1}   \log   |F(x+iy)|  \le C \, e^{a \, |y|} ,\qq    -\nf 
<y<\nf,
\eee
  for some fixed $C,a>0$ with $ a <\pi$. If $N_0$, $N_1$ are   continuous 
functions on the line 
  that satisfy $   N_0(y) \ge \log |F(iy)|$ and 
  $\   N_1(y) \ge \log |F(1+iy)|$ for all $y\in (-\nf, \nf)$,   then for any $\theta \in (0,1)$ we have
\bee\lab{claimedineq8887774}
\log |F(\theta)|  \le  \int_{-\nf}^{+\nf} \Om (1-\theta ,t) N_0(t) \, dt + \int_{-\nf}^{+\nf} \Om ( \theta, t) N_1(t) \, dt .
\eee
 \end{proposition}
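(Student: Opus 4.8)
\emph{Strategy.} The plan is to follow Hirschman's classical argument, whose two ingredients are the Poisson representation of harmonic functions on the unit strip $\mathbf{S}$ — for which $\Omega$ and its reflection $(x,y)\mapsto\Omega(1-x,y)$ are the two boundary kernels — together with a Phragmén--Lindel\"of maximum principle on $\mathbf{S}$ in which the hypothesis $a<\pi$ is decisive. \emph{Reductions.} If $F\equiv 0$ then $\log|F(\theta)|=-\nf$ and there is nothing to prove, so assume $F\not\equiv 0$; then $u:=\log|F|$ is subharmonic in $\mathbf{S}$ and continuous, as a $[-\nf,\nf)$-valued function, on $\overline{\mathbf{S}}$. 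We may also assume the right-hand side of \eqref{claimedineq8887774} is finite, the opposite case being trivial. Since \eqref{claimedineq8887774ii} gives $\log|F(iy)|\le Ce^{a|y|}$ and $\log|F(1+iy)|\le Ce^{a|y|}$, replacing $N_0$ by $t\mapsto\min\!\big(N_0(t),Ce^{a|t|}\big)$ and $N_1$ by $t\mapsto\min\!\big(N_1(t),Ce^{a|t|}\big)$ preserves the hypotheses $N_0\ge\log|F(i\cdot)|$, $N_1\ge\log|F(1+i\cdot)|$ and can only decrease the right-hand side; hence we may assume in addition that $N_0(t),N_1(t)\le Ce^{a|t|}$ for every $t$.

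\emph{The Poisson extension.} Define, for $0<x<1$,
\[
P(x+iy)=\int_{-\nf}^{+\nf}\Omega(1-x,y-t)\,N_0(t)\,dt+\int_{-\nf}^{+\nf}\Omega(x,y-t)\,N_1(t)\,dt .
\]
Because $\Omega(x,s)$ decays like $e^{-\pi|s|}$ as $|s|\to\nf$ while $N_0,N_1$ grow at most like $e^{a|t|}$ with $a<\pi$, these integrals converge locally uniformly; $P$ is harmonic in $\mathbf{S}$, extends continuously to $\overline{\mathbf{S}}$ with $P(it)=N_0(t)$ and $P(1+it)=N_1(t)$ (the approximate-identity property of the strip Poisson kernel), and obeys $P(x+iy)\le C'e^{a|y|}$ on $\overline{\mathbf{S}}$ for some $C'$. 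Evaluating at $z=\theta$, using that $\Omega(\cdot,s)$ is even in $s$ together with \eqref{FPx}, one obtains
\[
P(\theta)=\int_{-\nf}^{+\nf}\Omega(1-\theta,t)\,N_0(t)\,dt+\int_{-\nf}^{+\nf}\Omega(\theta,t)\,N_1(t)\,dt ,
\]
which is precisely the right-hand side of \eqref{claimedineq8887774}.

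\emph{Phragm\'en--Lindel\"of and conclusion.} Put $v:=u-P$; it is subharmonic in $\mathbf{S}$, upper semicontinuous on $\overline{\mathbf{S}}$, satisfies $v\le 0$ on $\partial\mathbf{S}$ by the choice of $N_0,N_1$, and obeys $v(x+iy)\le (C+C')e^{a|y|}$. Fix $\nu$ with $a<\nu<\pi$ and set $h_\nu(x+iy):=\cos\!\big(\nu(x-\tfrac12)\big)\cosh(\nu y)=\real\cos\!\big(\nu(z-\tfrac12)\big)$, which is harmonic in $\mathbf{S}$, and since $|x-\tfrac12|\le\tfrac12$ forces $\cos(\nu(x-\tfrac12))\ge\cos(\nu/2)>0$, satisfies $h_\nu(x+iy)\ge\tfrac12\cos(\nu/2)\,e^{\nu|y|}>0$ on $\overline{\mathbf{S}}$. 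For each $\ve>0$, $v_\ve:=v-\ve h_\nu$ is subharmonic in $\mathbf{S}$, is $\le 0$ on $\partial\mathbf{S}$, and tends to $-\nf$ as $|y|\to\nf$ uniformly for $0\le x\le 1$ (because $\nu>a$); hence for $T$ large $v_\ve<0$ on the horizontal sides of the rectangle $\{0\le x\le1,\ |y|\le T\}$ and $v_\ve\le 0$ on its vertical sides, so the maximum principle for subharmonic functions gives $v_\ve\le 0$ on that rectangle, and therefore on all of $\mathbf{S}$. Letting $\ve\downarrow 0$ yields $v\le 0$ in $\mathbf{S}$, i.e. $\log|F(\theta)|=u(\theta)\le P(\theta)$, which is \eqref{claimedineq8887774}. \emph{Main obstacle.} The only delicate points are the properties of the strip Poisson extension $P$ (convergence, harmonicity, continuous boundary values, the $e^{a|y|}$ growth bound) and the Phragm\'en--Lindel\"of step, where it is essential that $a$ is strictly below the critical exponent $\pi$ of the width-one strip — this is exactly what makes the comparison function $\ve h_\nu$ available.
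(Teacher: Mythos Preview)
The paper does not give its own proof of this proposition; it merely attributes the result to Hirschman \cite[Lemma~1]{Hirschman}. Your outline is exactly the classical argument---Poisson extension on the strip followed by a Phragm\'en--Lindel\"of comparison with $\ve h_\nu$---and is correct in spirit.

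There is, however, a genuine slip in the growth estimate for $v=u-P$. From $u\le Ce^{a|y|}$ and your stated bound $P\le C'e^{a|y|}$ one cannot conclude $v\le(C+C')e^{a|y|}$; since $v=u-P$, what is needed is a \emph{lower} bound $P\ge -C'e^{a|y|}$. But the hypotheses give no control on how negative $N_0,N_1$ may be (they are continuous and dominate $\log|F|$ on the boundary, nothing more), so after your reduction you only know $N_j\le Ce^{a|t|}$, and $P$ has no a priori lower bound of the required type. For the same reason, dismissing the case ``right-hand side $=-\infty$'' as trivial is unjustified: in that case one must still prove $F(\theta)=0$. Both issues are cured by one further truncation. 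Set $N_j^M:=\max(N_j,-M)$ for $M>0$; these are still continuous, still satisfy $N_j^M\ge\log|F|$ on the respective boundary line, and now $-M\le N_j^M\le Ce^{a|t|}$, so the corresponding Poisson extension $P^M$ obeys $-M\le P^M(x+iy)\le C'e^{a|y|}$ and hence $v^M=u-P^M\le Ce^{a|y|}+M$. Your Phragm\'en--Lindel\"of step then applies verbatim to $v^M$ and yields $\log|F(\theta)|\le\int\Omega(1-\theta,t)N_0^M(t)\,dt+\int\Omega(\theta,t)N_1^M(t)\,dt$. Finally let $M\to\infty$: since $N_j^M\downarrow N_j$ and the positive parts $N_j^+\le Ce^{a|t|}$ are $\Omega$-integrable, monotone convergence gives \eqref{claimedineq8887774}, including the possibility that the right-hand side equals $-\infty$.
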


\section{Interpolation for analytic families of multilinear operators} 

 Throughout this section
 $(X_j,d_j,\mu_j)$, $0\le j \le m$,     are metric measure spaces
that satisfy assumptions (i) and (ii). 

\begin{definition}\label{DefAFAG}
Suppose
that for every $z \in \overline{{\mathbf S}}$
there is  an associated  $m$-linear
operator $T_z$ defined on ${\mathcal C}_c(X_1)\times \cdots \times {\mathcal C}_c(X_m)$
and taking values in $ L^1_{\textup{loc}} (X_0)$.  
We call $\{T_z\}_z$  an 
\emph{analytic family}\index{analytic family of operators} 
  if for all $(\vp_1,\dots, \vp_m ) $ in $\mathcal C_c(X_1)\times \cdots \times 
 C_c(X_m)$ and $w$ bounded  
 function 
with compact support on $X_0$   the mapping   
\begin{equation}\label{eqn: 1.3.analy}
z\mapsto  \int_{X_0}  T_z(\vp_1,\dots, \vp_m)\,  w \,d\mu_0
\end{equation}
is analytic in the open strip
${\mathbf S}$ and
continuous on its closure. The analytic family $\{T_z\}_z$ is called of
\emph{admissible growth}\index{admissible growth} if there is a constant $\ga$ with
$0\le \ga<\pi$ and an $s$ satisfying   $1\le  s \le \nf$, 
 such that for any  $(\vp_1,\dots , \vp_m)$ in $\mathcal C_c(X_1)\times \cdots \times 
 C_c(X_m)$ and  
 every compact subset $K$ of $X_0$ 
 there is   a constant $ C(\vp_1,\dots, \vp_m,K )$ such that
\begin{equation}\label{eqn: 1.3.admis.growth}
\log \bigg[ \int_K | T_z(\vp_1,\dots, \vp_m )|^sd\mu_0 \bigg]^{1/s}  \le C(\vp_1,\dots, \vp_m,K )\, e^{ \ga |\textrm{Im}\,z |}, 
\q \textup{for all $z\in \overline{\mathbf S}.$}
\end{equation}
\end{definition}

Now we state the main result on interpolation of analytic  multilinear operators. 

\begin{theorem}\label{thm-multilinear}
For $ z\in \overline{\mathbf S}$,  let   $T_z$  be an $m$-linear operator 
     on ${\mathcal C}_c(X_1)\times \cdots \times {\mathcal C}_c(X_m)$
with  values  in $ L^1_{\textup{loc}} (X_0)$ that form an analytic family  
 of admissible growth.  For $\kappa\in \{1,\dots , m\}$
let $0 < p_0^\kappa ,  p_1^\kappa \le \infty$,   $0 < q_0,  q_1 \le \infty$, 
fix $0< \theta < 1$,  and define $p^\kappa,q$ by the equations
\begin{equation}\lab{1.3.indices-10-00}
\frac{1}{ p^\kappa}=\frac{1-\theta}{ p_0^\kappa}+
\frac{\theta  }{p_1^\kappa} \quad\quad\text{and}\quad\quad
\frac{1}{ q}=\frac{1-\theta}{ q_0}+\frac{\theta  }{ q_1}\, .
\end{equation}
Suppose that for all  $(f^1,\dots, f^m)\in {\mathcal C}_c(X_1)\times \cdots \times {\mathcal C}_c(X_m)$   we have
\begin{align}
\big\|T_{iy}(f^1,\dots, f^m)\big\|_{L^{q_0} (X_0) }   \le &\,\, B_0M_0(y )
\prod_{\kappa=1}^m \|f^\kappa\|_{L^{p_0^\kappa} (X_\kappa) }\, , \label{eqn: 3.1.372}\\
\big\|T_{1+iy}(f^1,\dots, f^m)\big\|_{L^{q_1} (X_0)}  
 \le &\,\, B_1 M_1(y ) \prod_{\kappa=1}^m \|f^\kappa  \|_{L^{p_1^\kappa} (X_\kappa)} \, , \label{eqn: 3.1.373}
\end{align}
where $M_0$ and $M_1$ are nonnegative continuous functions on the real 
 line that  satisfy  
 \begin{equation}\label{eqn:1.3.hfytr}
M_0(y)   \le  e^{ c \, e^{\tau  |y|}}, \qq M_1(y)   \le   e^{ c \, e^{\tau  |y|}}
\end{equation}
for some $ c,\tau  \ge 0$ with $\tau <\pi$, and $B_0,B_1>0$.  
Then for all   $f^j$ in ${\mathcal C}_c (X_j) $, $1\le j\le m$,     we have
\begin{equation}\label{eqn: 3.1.876543}
\big\|T_{\theta } (f^1,\dots, f^m)\big\|_{L^{q} (X_0) } \le  B_0^{1-\theta} B_1^{\theta}  M(\theta ) 
 \prod_{\kappa=1}^m \big\|f^\kappa \big\|_{L^{p^\kappa} (X_\kappa) }, 
\end{equation}
where 
\begin{equation*}
M(\theta) =\exp \bigg\{ 
\int_{-\infty}^{\infty} \Big[
 \Om(1-\theta, y) \log M_0(y)  + \Om(\theta, y) \log M_1(y) \Big]\,\, dy \bigg\} .
\end{equation*}
\end{theorem}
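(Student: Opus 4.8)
The plan is to reduce the multilinear statement to a scalar three‑lines inequality via duality, using the analytic approximants supplied by Lemma~\ref{lem:016} to linearize the complex‑interpolation machinery on each factor simultaneously. First, fix $f^\kappa\in\mathcal C_c(X_\kappa)$ for $\kappa=1,\dots,m$. For each $\kappa$ apply Lemma~\ref{lem:016} with the pair $(p_0^\kappa,p_1^\kappa)$ (reordered so that $p_0^\kappa\le p_1^\kappa$; the case $p_0^\kappa=\infty$ is degenerate and handled separately) and parameter $\theta$ to obtain, for each $\ve>0$, analytic families $z\mapsto f^{\kappa,\ve}_z$ of the form \eqref{EQ-Fz} with $f^{\kappa,\ve}_\theta$ close to $f^\kappa$ in the relevant norms and with the boundary bounds \eqref{newest2}. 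On the dual side, since $1/q=(1-\theta)/q_0+\theta/q_1$, pick $w\in\mathcal C_c(X_0)$ (or a bounded compactly supported $w$) with $\|w\|_{L^{q'}}\le 1$ that nearly attains $\|T_\theta(f^1,\dots,f^m)\|_{L^q}$, and approximate $w$ in the same fashion by an analytic family $z\mapsto w^\ve_z$ on the dual exponent pair $(q_0',q_1')$, so that $w^\ve_\theta$ is close to $w$ and the boundary $L^{q_0'}$, $L^{q_1'}$ norms of $w^\ve_{it}$, $w^\ve_{1+it}$ are controlled by $(\|w\|_{L^{q'}}^{q'}+\ve')^{1/q_j'}$. (When some $q_j=\infty$ one uses the corresponding alternative in \eqref{newest}–\eqref{newest2}, and when $q\le 1$ duality must be replaced by a direct argument, which I will flag below.)

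Next, form the scalar function
\[
F(z)=\int_{X_0} T_z\big(f^{1,\ve}_z,\dots,f^{m,\ve}_z\big)\, w^\ve_z\, d\mu_0 .
\]
Because each $f^{\kappa,\ve}_z$ is a finite linear combination of fixed $\mathcal C_c(X_\kappa)$ functions with coefficients that are exponentials of $z$ (hence entire, nowhere vanishing), and $w^\ve_z$ likewise, multilinearity expands $F$ into a finite sum of terms $\prod_\kappa(\text{entire factor})\cdot\big(\int_{X_0}T_z(h^1,\dots,h^m)\, \tilde w\, d\mu_0\big)$; by the analyticity hypothesis \eqref{eqn: 1.3.analy} each such integral is analytic in $\mathbf S$ and continuous on $\overline{\mathbf S}$, so $F$ is too. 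The admissibility condition \eqref{eqn: 1.3.admis.growth}, combined with Hölder on the compact set $K=\operatorname{supp}w^\ve_z\subset$ a fixed compact set independent of $z$ (the supports $U_j^\ve$ do not move with $z$), and with the crude bound $|f^{\kappa,\ve}_z|\le C e^{c|\operatorname{Im}z|}$ coming from the explicit exponents in \eqref{EQ-Fz}, gives $\log|F(z)|\le C'e^{a|\operatorname{Im}z|}$ for some $a<\pi$; absorbing the factor $M_0,M_1$ bounds \eqref{eqn:1.3.hfytr} (with $\tau<\pi$) keeps the growth admissible. Then $\log|F|$ is subharmonic on $\mathbf S$ (it is the log‑modulus of an analytic function), so Proposition~\ref{3linesHirschman} applies with
\[
N_0(y)=\log\Big(B_0 M_0(y)\prod_\kappa\|f^{\kappa,\ve}_{iy}\|_{L^{p_0^\kappa}}\cdot\|w^\ve_{iy}\|_{L^{q_0'}}\Big),\quad
N_1(y)=\log\Big(B_1 M_1(y)\prod_\kappa\|f^{\kappa,\ve}_{1+iy}\|_{L^{p_1^\kappa}}\cdot\|w^\ve_{1+iy}\|_{L^{q_1'}}\Big),
\]
which dominate $\log|F(iy)|$ and $\log|F(1+iy)|$ by \eqref{eqn: 3.1.372}–\eqref{eqn: 3.1.373} and Hölder's inequality on $X_0$.

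The three‑lines conclusion \eqref{claimedineq8887774} then reads $\log|F(\theta)|\le \int\Omega(1-\theta,t)N_0(t)\,dt+\int\Omega(\theta,t)N_1(t)\,dt$. Using \eqref{newest2} to bound the boundary norms of $f^{\kappa,\ve}_{it}$, $f^{\kappa,\ve}_{1+it}$ by $(\|f^\kappa\|_{L^{p^\kappa}}^{p^\kappa}+\ve')^{1/p_j^\kappa}$ and similarly for $w^\ve$, together with the normalization $\int\Omega(1-\theta,t)\,dt=1-\theta$, $\int\Omega(\theta,t)\,dt=\theta$ from \eqref{FPx}, the constant terms combine: the $B_j$ powers give $B_0^{1-\theta}B_1^{\theta}$, the $M_j$ terms give exactly $M(\theta)$, and the norm terms give $\prod_\kappa(\|f^\kappa\|_{L^{p^\kappa}}^{p^\kappa}+\ve')^{(1-\theta)/p_0^\kappa+\theta/p_1^\kappa}=\prod_\kappa(\|f^\kappa\|_{L^{p^\kappa}}^{p^\kappa}+\ve')^{1/p^\kappa}$ and likewise $(\|w\|_{L^{q'}}^{q'}+\ve')^{1/q'}\le (1+\ve')^{1/q'}$. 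Hence $|F(\theta)|\le B_0^{1-\theta}B_1^\theta M(\theta)\prod_\kappa(\|f^\kappa\|_{L^{p^\kappa}}^{p^\kappa}+\ve')^{1/p^\kappa}(1+\ve')^{1/q'}$. Finally, let $\ve\to 0$: by \eqref{newest} $f^{\kappa,\ve}_\theta\to f^\kappa$ in $L^{p_0^\kappa}$ and $L^{p_1^\kappa}$, hence $T_\theta(f^{1,\ve}_\theta,\dots,f^{m,\ve}_\theta)\to T_\theta(f^1,\dots,f^m)$ in the sense needed (here one uses that $T_\theta$ is bounded from $\prod L^{p_j^\kappa}$, which follows from \eqref{eqn: 3.1.372}–\eqref{eqn: 3.1.373} at $y=0$ together with multilinearity and density — a standard continuity/multilinearity argument), $w^\ve_\theta\to w$ in $L^{q'}$, and $\ve'\to 0$; taking the supremum over admissible $w$ yields \eqref{eqn: 3.1.876543} when $q\ge 1$.

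\textbf{Main obstacle.} The delicate points are two. First, the bookkeeping that the supports $U_j^\ve$ in \eqref{EQ-Fz} are fixed independently of $z$, so that the compact set on which admissibility \eqref{eqn: 1.3.admis.growth} is invoked does not depend on $z$ and the resulting growth exponent $a$ can be taken strictly below $\pi$ after adding the contributions of $\gamma$, $\tau$, and the exponential coefficients in \eqref{EQ-Fz} — one must verify these coefficients contribute only \emph{linear} exponential growth $e^{c|\operatorname{Im}z|}$, which they do since the exponents $\frac{p}{p_0}(1-z)+\frac{p}{p_1}z$ are affine in $z$. I expect this to be the heart of the argument and the reason the hypotheses are stated with $\gamma<\pi$ and $\tau<\pi$ but no constraint on the coefficient $c$. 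Second, the quasi‑Banach range $q<1$: there duality is unavailable, and one instead applies the three‑lines argument directly to a linearization of $\|T_\theta(\dots)\|_{L^q}$ — e.g. via the Aoki–Rolewicz‑type device or by testing against simple functions and using the lattice structure, as in \cite{GrafakosMastylo}; I would handle it by the same scheme with $w^\ve_z$ replaced by a suitable analytic family of normalized test functions, noting that the powers of $\|w\|$ are harmless since $w$ can be taken with $L^{q'}$‑type normalization $\le 1$ in the appropriate generalized sense. Everything else is routine Hölder and limiting.
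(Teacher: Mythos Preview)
Your proposal for the ``Banach'' range matches the paper's Case~I ($\min(q_0,q_1)>1$) almost exactly: build $F(z)=\int_{X_0}T_z(f^{1,\ve}_z,\dots,f^{m,\ve}_z)\,w^\ve_z\,d\mu_0$, check admissible growth, apply Proposition~\ref{3linesHirschman}, then let $\ve\to 0$ and dualize. Two corrections. First, the coefficients $|c_j^\ve|^{\frac{p}{p_0}(1-z)+\frac{p}{p_1}z}$ have modulus $|c_j^\ve|^{\frac{p}{p_0}(1-\real z)+\frac{p}{p_1}\real z}$, hence are \emph{bounded} on $\overline{\mathbf S}$, not merely of linear exponential growth; so the only exponential in $|\imag z|$ comes from \eqref{eqn: 1.3.admis.growth}, and there is nothing to add to~$\gamma$. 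Second, your $\ve\to 0$ step is not justified: the hypotheses \eqref{eqn: 3.1.372}--\eqref{eqn: 3.1.373} at $y=0$ control $T_0$ and $T_1$, not $T_\theta$, so you cannot simply say ``$T_\theta$ is bounded from $\prod L^{p_j^\kappa}$''. The paper handles this by first running Hirschman on the \emph{undeformed} pairing $H(z)=\int T_z(f^1,\dots,f^m)g\,d\mu_0$ to obtain a crude bound of the form
\[
\Big|\int_{X_0}T_\theta(f^1,\dots,f^m)\,g\,d\mu_0\Big|\le M(\theta)\,\Big(B_0\prod_\kappa\|f^\kappa\|_{L^{p_0^\kappa}}\|g\|_{L^{q_0'}}\Big)^{1-\theta}\Big(B_1\prod_\kappa\|f^\kappa\|_{L^{p_1^\kappa}}\|g\|_{L^{q_1'}}\Big)^{\theta},
\]
and this mixed-norm continuity is exactly what is needed to pass $f^{\kappa,\ve}_\theta\to f^\kappa$, $w^\ve_\theta\to w$ via \eqref{newest}. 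Also note the correct case split is $\min(q_0,q_1)>1$ versus $\le 1$, not $q\ge1$ versus $q<1$: your H\"older step $|\int T_{iy}(\cdots)\,w_{iy}^\ve|\le\|T_{iy}(\cdots)\|_{L^{q_0}}\|w_{iy}^\ve\|_{L^{q_0'}}$ already fails once $q_0<1$, regardless of~$q$.

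The genuine gap is Case~II, $\min(q_0,q_1)\le 1$, where you only gesture at ``Aoki--Rolewicz'' or ``lattice structure''. The paper's device is different and quite specific: choose $r>1$ with $r\min(q_0,q_1)>q$, take a nonnegative simple $g=\sum a_k\chi_{E_k}$ with $\|g\|_{L^{r'}}=1$ and compact $E_k$, set $g^z=\sum a_k^{R(z)}\chi_{E_k}$ with $R$ affine and $R(\theta)=1$, and study
\[
G(z)=\int_{X_0}\big|T_z(f^{1,\ve}_z,\dots,f^{m,\ve}_z)\big|^{q/r}\,|g^z|\,d\mu_0 .
\]
This $G$ is \emph{not} analytic, but it is log-subharmonic: one writes $G(z)=\sum_k\int_{E_k}|F_k(\cdot,z)|^{q/r}d\mu_0$ with each $z\mapsto F_k(\cdot,z)$ an $L^1(E_k)$-valued analytic function (verified via the weak analyticity hypothesis and Theorem~\ref{BanachAnalytic}), and then invokes the Stein--Weiss lemma (Lemma~\ref{SWTohoku}) together with Lemma~\ref{logsubh}. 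Hirschman applies to $G$, H\"older with exponents $rq_j/q$ and its conjugate controls $G(it)$ and $G(1+it)$, and $G(\theta)=\int|T_\theta(f^{1,\ve}_\theta,\dots)|^{q/r}g\,d\mu_0$; taking the supremum over such $g$ recovers $\|T_\theta(f^{1,\ve}_\theta,\dots)\|_{L^q}$. The $\ve\to0$ passage again requires an auxiliary continuity estimate for $T_\theta$, obtained by running the same log-subharmonic argument on the undeformed $H(z)=\int|T_z(f^1,\dots,f^m)|^{q/r}|g^z|$. This log-subharmonicity step is the missing idea in your sketch.
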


\begin{proof}   
\noindent {\bf  Case I}: $ \min (q_0,  q_1) > 1$.  

This assumption forces $ q_0',q_1'<\nf$ and so $q'<\nf$ as well. 
Given $T_z$ as in the statement of the theorem, for   
$f^j\in \mathcal C_c(X_j)$, $1\le j \le m$,  and $  g\in \mathcal C_c(X_0)$ one may 
be tempted to 
consider the   family of operators
\[
H(z)= \int_{X_0} T_z(f^1,\dots , f^m)\, g \, d\mu_0
\]
 which is analytic in $\mathbf S$, continuous and bounded in $\overline{\mathbf S}$ 
and satisfies the hypotheses of Proposition \ref{3linesHirschman} with  bounds 
\[
|H(iy)|\le B_0 \, M_0(y)  \prod_{\kappa=1}^m\| f^\kappa \|_{L^{p_0^\kappa}}    
\|g \|_{L^{q_0'}} 
,\q |H(1+iy)|\le B_1 \, M_1(y)  \prod_{\kappa=1}^m \| f^\kappa \|_{L^{p_1^\kappa}} 
   \|g \|_{L^{q_1'}}
\]
 for all real $y $.  
Applying the result of    Proposition \ref{3linesHirschman} and identity  
 \eqref{FPx} (with $x=1-\theta$ and $x=\theta$) yields  for all $f^1,\dots, f^m\in \mathcal C_c(X)$ 
and $g\in \mathcal C_c(X_0)$ 
\begin{equation}\begin{split} \label{aux94948}
\bigg| \!  \int_{X_0} \!\!\!  T_\theta(f^1,\dots, f^m)\, g\, d\mu_0 \bigg|  
  \le M(\theta) 
\big( B_0 \prod_{\kappa=1}^m  \| f^\kappa 
 \|_{L^{p_0}}  \|g \|_{L^{q_0'}}\big) ^{1-\theta} 
\big( B_1 \prod_{\kappa=1}^m  \| f^\kappa  \|_{L^{p_1}} 
   \| g \|_{L^{q_1'}}\big) ^{ \theta}  .
\end{split} \end{equation}  
Unfortunately  this bound does not provide the claimed assertion; it supplies, however, a useful 
continuity estimate for the operator   $T_\theta$.

To improve \eqref{aux94948}, let us first consider the situation where $
\min ( p_0 ^\kappa, p_1^\kappa) <\nf$ for some $\kappa \in \{1,\dots, m\}$, which 
forces $p^\kappa<\nf$ for the same $\kappa$.
  Fix $f^j\in\mathcal C_c(X_j)$, $g\in \mathcal C_c(X_0)$
    and $\ve>0$. By Lemma \ref{lem:016} we can 
find   $f_z^{1,\ve} ,\dots , f_z^{m,\ve} $ and $ g_z^\ve$ such that 
  \[
  f_z^{\kappa, \ve}=  \sum_{j_\kappa=1}^{N_\ve^\kappa}|c_{j_\kappa} ^{\kappa,\ve} |^{\frac {p^\kappa}{p_0^\kappa} (1-z) +   \frac {p^\kappa}{p_1^\kappa}  z} u_{j_\kappa}^{\kappa,\ve}, 1\le \kappa\le m, 
  \q 
  g_z^\ve=  \sum_{k=1}^{M_\ve} |d_k^\ve|^{\frac {q'}{q_0'} (1-z) +   \frac {q'}{q_1'}  z} v_k^\ve, 
  \]
  where $(u_{j_1}^{1,\ve},\dots, u_{j_m}^{m,\ve})$ lies in $\mathcal C_c(X_1)\times 
  \cdots \times \mathcal C_c(X_m)$,  
    $  v_k^\ve$ in $\mathcal C_c(X_0)$,  and 
  \begin{equation}
   \| f_\theta^{\kappa,\ve} -f^\kappa  \|_{L^{p_0}}<\ve , \   \|{g_\theta^\ve-g}\|_{L^{q_0'}}<\ve, \ 
 \| f_\theta^{\kappa,\ve}-f^\kappa\|_{L^{p_1}}<\ve , \   
 \|{g_\theta^\ve-g}\|_{L^{q_1'}}<\ve \label{eqn: 1.3.86556}  
 \end{equation}
 \begin{equation}
  \|{f_{it}^{\kappa, \ve}}\|_{L^{p_0}}\le   \big( \| f^\kappa \|_{L^{p}}+\ve' \big)^{\frac p{p_0}} ,\quad
 \|{g_{it}^\ve}\|_{L^{q_0'}}\le   \big( \| g \|_{L^{q'}}+\ve' \big)^{\frac {q'}{q_0'}} , \label{eqn: 1.3.87} 
 \end{equation}
 \begin{equation}
   \|{f_{1+it}^{\kappa,\ve}}\|_{L^{p_1}}\le   \big( \|{f^\kappa}\|_{L^{p}}+\ve' \big)^{\frac p{p_1}},\quad
  \|{g_{1+it}^\ve}\|_{L^{q_1'}}\le  \big( \|{g}\|_{{q'}}+\ve' \big)^{\frac {q'}{q_1'}}, \label{eqn: 1.3.88}
  \end{equation}
 where  $\| f_\theta^{\kappa,\ve}-f^\kappa \|_{L^{p_1^\kappa}}<\ve$ in \eqref{eqn: 1.3.86556} 
 is replaced by $ \|f_{\theta}^{\kappa,\ve}  \|_{L^\nf} \le  \| f^\kappa \|_{L^\nf}+\ve$, if $p_1^\kappa=\nf$ and analogously   if $p_0^\kappa=\nf$. 
 
Now consider the function defined on   the closure of the unit  strip
\[
F(z) = \int_{X_0} T_z(f_z^{1, \ve}, \dots , f_z^{m, \ve} ) \, g_z^\ve\, d\mu_0 
\]
\[
= \sum_{ \substack{ 1\le j_1 \le  N^{1}_\ve \\ \vdots \\  
1\le j_m \le  N^{m}_\ve }}  \sum_{k=1}^{M_\ve} \Bigg\{ \bigg[ \prod_{\kappa=1}^m
 |c_{j_\kappa}^{\kappa,\ve}    
 |^{\frac {p^\kappa} {p_0^\kappa} (1-z) +   \frac {p^\kappa}{p_1^\kappa}  
z} 
  |d_k^\ve|^{\frac {q'}{q_0'} (1-z) +   \frac {q'}{q_1'}  z} \bigg] \int_{X_0} T_z(u_{j_1}^{1,\ve},\dots, 
  u_{j_m}^{m,\ve}) v_k^\ve\, d\mu_0  \Bigg\}.
\]
  
Applying H\"older's inequality with exponents $s$ and $s'$ to 
$ \int_{X_0} T_z(u_{j_1}^{1,\ve},\dots, 
  u_{j_m}^{m,\ve}) v_k^\ve\, d\mu_0 $
and using condition \eqref{eqn: 1.3.admis.growth}
 we obtain for any $z$ in $\overline{\mathbf S}$  
\[
 |F(z) | \le  \Bigg[\prod_{\kappa=1}^m \sum_{j_\kappa=1  } ^{   N^{\kappa}_  \ve } 
 |c_{j_\kappa}^{\kappa,\ve} 
 |^{\frac {p^\kappa} {p_0^\kappa} +   \frac {p^\kappa}{p_1^\kappa}   } 
  \sum_{k=1}^{M_\ve}
  |d_k^\ve|^{\frac {q'}{q_0'}  +   \frac {q'}{q_1'}  }   \| v^\ve_k\|_{L^{s'}}\Bigg] 
  e^{ [\max\limits_{j_1,\dots , j_m,k} C(u_{j_1}^{1,\ve} , \dots , u_{j_m}^{m,\ve}, \textup{supp } v_k^\ve)  ]  e^{\ga  |\imag z|}   } 
  \]
  \[
  \le e^{C'  e^{\ga  |\imag z|}   }  ,
\] 
where $C'$ equals $\max_{j_1,\dots , j_m,k} C(u_{j_1}^{1,\ve} , \dots , u_{j_m}^{m,\ve}, \textup{supp } v_k^\ve)$ plus the logarithm of 
the double sum in the square brackets. 
Thus  $F$ satisfies the hypothesis of Proposition~\ref{3linesHirschman}, as $\ga<\pi$. 
 
H\"older's inequality,    hypothesis
(\ref{eqn: 3.1.372}) and (\ref{eqn: 1.3.87})     give for   $y$ real
\[
|F(iy)| \le  B_{0}M_0(y)
\prod_{\kappa=1}^m
\big\|f_{iy}^{\kappa,\ve}\big\|_{L^{p_0^\kappa}} \big\|g_{iy}^\ve\big\|_{L^{q_0'}} 
\le   B_{0}M_0 (y) \prod_{\kappa=1}^m
\big( \|f^\kappa\|_{L^{p}}+\ve'\big)^{\frac {p^\kappa}{p_0^\kappa}}
\big( \|{g}\|_{L^{q'}}+\ve'\big)^{\frac {q'}{q_0'}}.  
\]

Likewise,  H\"older's inequality,  the hypothesis
(\ref{eqn: 3.1.373}) and (\ref{eqn: 1.3.88}) imply  for   $y$ real
 \begin{eqnarray*}
|F(1+iy)|  
&\le& B_{1}M_1(y) \prod_{\kappa=1}^m
\big\|f_{1+iy}^{\kappa,\ve}\big\|_{L^{p_1^\kappa}} \big\|g_{1+iy}^\ve\big\|_{L^{q_1'} }\\
& \le& 
  B_{1}M_1 (y) \prod_{\kappa=1}^m
  \big( \|f^\kappa\|_{L^{p^\kappa}}+\ve'\big)^{\frac {p^\kappa} {p_1^\kappa}}
  \big( \|{g}\|_{L^{q'}}+\ve'\big)^{\frac {q'}{q_1'}}.  
\end{eqnarray*}

As $\log |F|$ is subharmonic in ${\mathbf S}$, applying Proposition~\ref{3linesHirschman} we 
 obtain
\[
\log |F(\theta)|  \le  \int_{-\nf}^{+\nf} \Om (1-\theta ,t) \log [ M_0(t)\,Q_0] \, dt + 
\int_{-\nf}^{+\nf} \Om ( \theta, t) \log [M_1(t)\,Q_1] \, dt  , 
\]
where $\Om$ is  the Poisson kernel on the strip [defined in \eqref{PSonjstrip}] and 
\[
Q_0= B_0 \, \prod_{\kappa=1}^m\big( \|f^\kappa \|_{L^{p}}+\ve'\big)^{\frac {p^\kappa} {p_0^\kappa}}\big( \|{g}\|_{L^{q'}}+\ve'\big)^{\frac {q'}{q_0'}} , \,\,\,\,\,
Q_1=  B_1 \,  \prod_{\kappa=1}^m \big( \|f^\kappa\|_{L^{p}}+\ve'\big)^{\frac {p^\kappa}{p_1^\kappa}}\big( \|{g}\|_{L^{q'}}+\ve'\big)^{\frac {q'}{q_1'}} .
\]

Using identity \eqref{FPx} (with $x=1-\theta$ and $x=\theta$) and the 
fact  that 
\[
Q_0^{1-\theta}  Q_1^\theta =  B_0^{1-\theta} B_1^{\theta}  
 \prod_{\kappa=1}^m\big( \|f^\kappa \|_{L^{p}}+\ve'\big) \big( \|{g}\|_{L^{q'}}+\ve'\big) 
\]
we obtain    (with $M(\theta) $ as in the statement of the theorem) that
\begin{equation}\label{11AI-F}
\bigg|  \int_{X_0}\!\!\!  T_\theta(  f_\theta^{1,\ve}, \dots , f_\theta^{m,\ve} ) g_\theta^\ve \, d\mu_0 \bigg| = |F(\theta) | \le  
  M(\theta)  B_0^{1-\theta} B_1^{\theta} 
 \prod_{\kappa=1}^m\big( \|f^\kappa \|_{L^{p}}+\ve'\big) \big( \|{g}\|_{L^{q'}}+\ve'\big) .
\end{equation}
An application of the triangle inequality gives 
\begin{eqnarray}
&&\bigg|  \int_{X_0} T_\theta(  f^1,\dots, f^m) \, g  \, d\mu_0 - \int_{X_0} T_\theta(  f_\theta^{1,\ve}, \dots , f_\theta^{m,\ve}  )\, g_\theta^\ve \, d\mu_0  
\bigg|    \lab{5asdjhtteeq}  \\
&&\q   \le \sum_{\kappa=1}^m \bigg|  \int_{X_0} T_\theta( f^1_\theta,\dots, f^{\kappa-1}_\theta, f^\kappa - 
f^\kappa_\theta, f^{\kappa+1},\dots, f^m ) \,  g   \, d\mu_0 
\bigg| \notag \\
&& \qq\qq
+ \bigg|  \int_{X_0} T_\theta(  f_\theta^{1,\ve}, \dots , f_\theta^{m,\ve} ) \, (g-g_\theta^\ve  )\, d\mu_0   
\bigg| . \notag
\end{eqnarray}
We now apply \eqref{aux94948} in each of the terms on the right side of the 
inequality and we use
\eqref{eqn: 1.3.86556}.
We deduce that \eqref{5asdjhtteeq} tends to zero as $\ve\to 0$. 
We conclude 
\begin{equation}\label{11AI-Fee}
\bigg|  \int_{X_0} T_\theta(  f^1,\dots, f^m  )\, g  \, d\mu_0 \bigg|   \le  
  M(\theta)  B_0^{1-\theta} B_1^{\theta} 
\prod_{\kappa=1}^m   \|f^\kappa\|_{L^{p}}  \|{g}\|_{L^{q'}}  .
\end{equation}
Finally we obtain \eqref{eqn: 3.1.876543} by   taking the supremum  
in \eqref{11AI-Fee} over all 
$g$ in $\mathcal C_c(X_0)$ with $L^{q'}$ norm equal to $1$. 

Suppose now that $ p_0^\kappa=p_1^\kappa =\nf$ for some $\kappa$. This 
forces  $p^\kappa=\nf$ for these $\kappa$. Without loss of generality assume that 
$ p_0^\kappa=p_1^\kappa =\nf$ for all $\kappa \le \la$ and $\min ( p_0^\kappa, 
 p_1^\kappa)<\nf$ for all $\kappa\in \{\la+1,\dots , m\}$. 
We repeat the preceding argument working with the analytic function
\[
F(z) = \int_{X_0} T_z(f^1,\dots, f^\la, f_z^{\la+1,\ve} , \dots, f_z^{m,\ve} ) \, g_z^\ve \, d\mu_0   
\]
on  ${\mathbf S}$ which is multilinear of a lower degree and satisfies the initial estimates
\[
\big| F(iy) \big|  \le B_0  
\Big( \prod_{\kappa=1}^\la  \big\|f^\kappa\big\|_{L^\nf} \Big)   M_0(y) 
\Big(\prod_{\kappa=\la+1}^m  \big\|f^\kappa\big\|_{L^{p^\kappa_0} } \Big)
\big\|g\big\|_{L^{q'}} .
\]
and 
\[
\big| F(1+iy) \big|  \le B_1   
\Big( \prod_{\kappa=1}^\la  \big\|f^\kappa\big\|_{L^\nf} \Big)   M_1(y)
\Big(\prod_{\kappa=\la+1}^m  \big\|f^\kappa\big\|_{L^{p^\kappa_1} } \Big)
\big\|g\big\|_{L^{q'}} .
\]
The argument in the previous case using  Proposition~\ref{3linesHirschman}  yields 
\[
\bigg| \int_{X_0} T_\theta (f^1,\dots, f^m)\, g \, d\mu_0  \bigg|  \le B_0^{1-\theta} B_1^{\theta}  
\Big( \prod_{\kappa=1}^\la  \big\|f^\kappa\big\|_{L^\nf} \Big)M(\theta)
\Big(\prod_{\kappa=\la+1}^m  \big\|f^\kappa\big\|_{L^{p^\kappa} } \Big)
\big\|g\big\|_{L^{q'}} .
\]
Finally we  take  the supremum of the integrals  over all 
$g$ in $\mathcal C_c(X)$ with $L^{q'}$ norm equal to $1$, to deduce  
\eqref{eqn: 3.1.876543}.  

\medskip
\noindent {\bf  Case II}:  $ \min (q_0,  q_1) \le  1$.   

 Assume first that $\min (p_0^\kappa,p_1^\kappa)<\nf$ for all $\kappa$.   
Choose    $r>1$ such that $ r\,\min(q_0,q_1)>q.$ 
Let us fix  a nonnegative step function $g$ with $\norm{g}_{L^{r'}(X_0)}=1.$ Assume that
$g = \sum_{k=1}^Ka_k\chi_{E_k},$ where $a_k>0$ and $E_k$ are pairwise disjoint
measurable compact subsets of $X_0$ (hence of finite measure). 
It suffices to work with such dense subsets of $L^{r'}(X_0)$ in view of the 
assumption that $X_0$ is a $\si$-finite metric space. 
For $z\in \CC$ set
\[
  g^z = \sum_{k=1}^Ka_k^{R(z)}\chi_{E_k},
\]
  where we set
\[
 R(z) = r'\Big[1-  \dfrac{q}{rq_0}(1-z) -\dfrac{q}{rq_1} z \Big].
\]
Notice that $R(\theta)=1$. We fix   $f^\kappa\in \mathcal C_c(X)$ and $\ve>0$. Let $f_z^{\kappa, \ve}$ be as in Case I  obtained by Lemma \ref{lem:016}.  Define  the function 
\begin{equation}\label{o34994ngt}
  G(z) = \int_{X_0} \abs{T_{ z}(f_z^{1,\ve} ,\dots, f_z^{m,\ve}) }^{\frac{q}{r}}\abs{g^z }\; d\mu_0 = \sum_{k=1}^K\int_{E_k} \big|   F_k(z,x) \big|^{\frac{q}{r}}\; d\mu_0 (x)  .
\end{equation}
  where
\[
  F_k(x,z)= a_k^{\f rq R(z)} 
  \sum_{ \substack{ 1\le j_1 \le  N^{1}_\ve \\ \vdots \\  
1\le j_m \le  N^{m}_\ve }}   \Bigg[ \prod_{\kappa=1}^m
\bigg(    |c_{j_\kappa}^{\kappa,\ve}|^{\frac {p^\kappa}{p_0^\kappa} (1-z) 
+   \frac {p^\kappa}{p_1^\kappa}  z} \bigg)   T_{ z}(u_{j_1}^{1,\ve}, \dots, u_{j_m}^{m,\ve})(x) \bigg].
\]
  If we knew that each term of the sum on the right in \eqref{o34994ngt} 
  is log-subharmonic, it would follow from 
  Lemma~\ref{logsubh} that so is $G$. To achieve this we use Lemma~\ref{SWTohoku}, which 
  requires knowing that for each $k$, the mapping 
$z\mapsto F_k(\cdot, z)$  is analytic from $\mathbf S$ to $L^1(E_k)$.
To show this, in view of Theorem~\ref{BanachAnalytic}, 
it suffices to show that for any bounded function $w$ supported in $E_k$
  the function $z\mapsto \int_{E_k} F_k(z,x) w(x)\, d\mu_0( x) $ is   analytic   
in $\mathbf  S$ and continuous on its closure;  but this condition is guaranteed
 by the   definition of analytic families.

We plan to apply Proposition~\ref{3linesHirschman} to $G$ and 
we verify its hypotheses. 
Using H\"older's inequality
  with indices $\frac{r q_0}{q} $ and $\big( \frac{r q_0}{q} \big)'$, 
  \eqref{eqn: 3.1.372}, and the fact $\|g\|_{L^{r'}}=1$ we obtain 
  \begin{eqnarray*}
  {G(it)} &\le&  \left\{\int_{X_0}
  \abs{T_{ {it}}(f_{it}^{1,\ve} ,\dots , f_{it}^{m,\ve} ) }^{q_0} d\mu_0 \right\}^{\frac{q}{r q_0}}
  \big\| g^{it } \big\|_{L^{ ( \frac{r q_0}{q}  )'}}  \\  
  &\le&  \, \Bigg[ B_{0}M_0(t)  \prod_{\kappa= 1}^m  \Big(\big\|f^\kappa\big\|_{L^{p^\kappa } }^{p^\kappa}  +\ve'\Big)^{\f 1 {p^\kappa} } \Bigg]^{\frac{q}{r}}.
  \end{eqnarray*}
  Similarly, we obtain the estimate
  \[
  {G(1\!+\! it)}  
  \le  \, \Bigg[ B_{1}M_1(t)  \prod_{\kappa= 1}^m  \Big(\big\|f^\kappa\big\|_{L^{p^\kappa } }^{p^\kappa}  +\ve'\Big)^{\f 1 {p^\kappa} } \Bigg]^{\frac{q}{r}}.
  \]  
  Finally we verify condition   \eqref{claimedineq8887774ii} for $G$. 
Let $E$ be a  compact set  that contains all $E_k$. 
We apply H\"older's inequality
  with indices $\frac{r s}{q} $ and $\big( \frac{r s}{q} \big)'$ to obtain for  $z\in \overline{\mathbf S}$ 
\begin{align*}
 &  {G(z)}   \\
   & \le   \big\|   T_{ {z}}(f_{z}^{\ve}  ) \chi_E \big\|_{L^s } ^{\frac{q}{r }}
  \big\| g^{z } \big\|_{L^{ ( \frac{r s}{q}  )'}}    \\
& \le     \bigg[ \sum_{\substack{1\le j_1\le  N^1_\ve \\ \vdots \\ 1\le j_m\le  N^m_\ve
} }   \prod_{\kappa=1}^m   |c_{j_\kappa}^{\kappa,\ve}|^{\f{p^\kappa}{p_0^\kappa}+\f{p^\kappa}{p_1^\kappa}} 
  \big\|   T_{ z}(u_{j_1}^{1,\ve} ,\dots,  u_{j_m}^{m,\ve})  \big\|_{L^s(E)} \bigg]^{\f qr} 
   \bigg[ \sum_{k=1}^K |d_k|^{r'[ 1+\f qr ( \f 1{q_0} +\f 1 {q_1})] } \big\| 
    \chi_{E_k}  \big\|_{L^{ ( \frac{r s}{q}  )'}}  \bigg]  \\
& \le       e^{\f qr \sup\limits_{j_1,\dots, j_m}\!\!\!\! C(u_{j_1}^{1,\ve},\dots, u_{j_m}^{m,\ve}, E) e^{\ga |\imag z|} }  \bigg[\!\! \sum_{\substack{1\le j_1\le  N^1_\ve \\ \vdots \\ 1\le j_m\le  N^m_\ve
} }   \prod_{\kappa=1}^m   |c_{j_\kappa}^{\kappa,\ve}|^{\f{p^\kappa}{p_0^\kappa}+\f{p^\kappa}{p_1^\kappa}}  \bigg]^{\!\f qr} 
   \bigg[ \sum_{k=1}^K |a_k|^{r'[ 1+\f qr ( \f 1{q_0} +\f 1 {q_1})] } \big\| 
    \chi_{E_k}  \big\|_{L^{ ( \frac{r s}{q}  )'}}  \bigg]      
\end{align*}
having used \eqref{eqn: 1.3.admis.growth}. 
Taking the logarithm we deduce condition   \eqref{claimedineq8887774ii} for $G$.

As $g^\theta=g$,   by Proposition~\ref{3linesHirschman} we conclude 
  \begin{equation}\label{equ:TSgmLe1}
 \int_{X_0}
  \abs{T_{\theta }(f_{ 1,\theta }^{\ve} ,\dots, 
  f_{ m,\theta }^{\ve} ) }^{\frac{q}{r}}g \; d\mu_0=  G(\theta ) \le  
  \Big(  B_0^{1-\theta} B_1^{\theta}  M(\theta )
  \prod_{\kappa=1}^m
\Big(\big\|f^\kappa\big\|_{L^{p^\kappa } }^{p^\kappa}  +\ve'\Big)^{\f 1 {p^\kappa} } 
  \Big)^{\!\frac{q}{r}}.
  \end{equation}
Inequality \eqref{equ:TSgmLe1} implies that
  \begin{align}
&  \big\|T_{\theta }(f_{\theta }^{1,\ve} ,\dots, f_{\theta }^{m,\ve}  )\big\|_{L^q} \notag \\
  = &\,\,
  \bigg\| \abs{T_{\theta }(f_{ \theta }^{1,\ve} ,\dots, f_{ \theta }^{m,\ve} )}^{\frac{q}{r}}
  \bigg\|_{L^r}^{\frac{r}{q}} \notag \\
  =&\,\,
  \sup  \bigg\{ \!\! \int_{X_0} \abs{T_{\theta}(f_{\theta}^{1,\ve} ,\dots ,  f_{\theta}^{m,\ve}    ) }^{\frac{q}{r}} g  d\mu_0 \! : \, 
   \mbox{$g = \sum_{k=1}^Ka_k\chi_{E_k},$ $a_k>0$, $E_k$ compact}, 
\,   \|g \|_{L^{r'}}= 1
  \bigg\}^{\frac{r}{q}}  \notag    \\
  \le&\, \,  B_0^{1-\theta} B_1^{\theta}  M(\theta ) 
  \prod_{\kappa=1}^m
\Big(\big\|f^\kappa\big\|_{L^{p^\kappa } }^{p^\kappa}  +\ve'\Big)^{\f 1 {p^\kappa} } . \label{aiwueha}
  \end{align}
  
 We also note that a similar argument applied to the log-subharmonic function 
\[
H(z) = \int_{X_0}  \abs{T_{ z}(f_1,\dots , f_m ) }^{\frac{q}{r}}\abs{g^z }\, d\mu_0
\]
 yields the  estimate 
 \[
 |H(\theta) |  =\bigg| \int_{X_0} | T_\theta (f_1,\dots , f_m) |^{\f qr}\, g  \, d\mu_0 
 \bigg| \le \Big(  B_0^{1-\theta} B_1^{\theta} M(\theta) 
 \prod_{\kappa=1}^m \big\| f^\kappa \big\|_{L^{p_0^\kappa}}^{1-\theta} 
 \big\| f^\kappa\big\|_{L^{p_1^\kappa}}^{ \theta}  \Big)^{\! \f qr}, 
 \]
 from which it follows  that 
\begin{equation}\label{kjwakjjkenj}
 \big\| T_{\theta }(f^1,\dots , f^m)\big\|_{L^q} \le B_0^{1-\theta} B_1^{\theta}  M(\theta) 
\prod_{\kappa=1}^m \big\| f^\kappa \big\|_{L^{p_0^\kappa}}^{1-\theta} 
 \big\| f^\kappa\big\|_{L^{p_1^\kappa}}^{ \theta} ,  
 \end{equation}
via a duality argument similar to that leading to \eqref{aiwueha}. 
  
We now make use of   the triangle inequality
  $$
  \| T_{\theta }(f^1,\dots , f^m ) \|_{L^q}^{\min(1,q)}  \le 
  \sum_{\kappa=1}^m     \| T_{\theta }(\dots, f^\kappa - f _{\theta }^{\kappa, \ve},\dots) \|_{L^q}^{\min(1,q)}  +  
  \|T_{\theta }(f _{\theta }^{1,\ve},\dots ,f _{\theta }^{m,\ve} ) \|_{L^q}^{\min(1,q)}  . 
  $$
For the second term on the right above we use \eqref{aiwueha}, while   the first term 
is bounded by a constant multiple of $(\ve^{1-\theta})^{\min(1,q)} $ 
in view of   \eqref{kjwakjjkenj},  and hence it  tends to zero as $\ve\to 
0$.  We deduce \eqref{eqn: 3.1.876543} by letting $\ve\to 0$. 
 
 Finally, if $p_0^\kappa=p_1^\kappa=\nf$ for certain $\kappa$ we factor these $\kappa$'s 
and we consider another multilinear operator of lower degree. For instance  if
$p_0^\kappa=p_1^\kappa=\nf$ exactly when $\kappa \le \la$, we consider the 
operator 
\[
(f^{\la+1}, \dots , f^m) \mapsto T_z(f^1,\dots, f^\la, f ^{\la+1 } , \dots, f ^{m } ) 
\]
which satisfies the initial assumptions with   constants $B_0$ and $B_1$ 
replaced by the original ones multiplied by $\prod_{\kappa=1}^\la \| f^\kappa\|_{\li}$. 
\end{proof}

As we already mentioned in the introduction, an interpolation theorem for analytic families of multilinear operators was proved in \cite{GrafakosMastylo}. The main difference between these results  is that  in \cite{GrafakosMastylo}  the concepts of analyticity and  admissibility condition  are in the pointwise while ours are in the integral sense, as mandated by   applications (see next section). 
Unlike  \eqref{eqn: 1.3.admis.growth} this pointwise  admissibility condition is not easy to check in general, especially when the operators involved do not have  explicit formulae.

\section{A bilinear estimate for Schr\"odinger operators}\label{sec3}

We consider the self-adjoint operator
$$L = -{\rm div}( A \nabla) + V$$
 on $L^2(\RR^n)$ where $A = (a_{kl})$ is a symmetric matrix with real-valued and bounded measurable entries. It is assumed to be elliptic with ellipticity constant $\gamma > 0$, that is
 $$ \sum_{k,l} a_{kl}(x) \xi_k \xi_l \ge \gamma | \xi |^2, \quad a.e.\,  x \in \RR^n, \ \forall \xi= (\xi_1, ..., \xi_n) \in \RR^n.$$
 The potential $V$ is assumed to be nonnegative and locally integrable.\\
By the standard  sesquilinear  form technique, one constructs a self-adjoint realization of $L$. The following theorem was proved by  Dragicevic and  Volberg \cite{DraVolb1}. 
\begin{theorem}\label{th1}
Let ${\bf \Gamma}$  be either $\nabla$ or multiplication by $\sqrt{V}$. Let $p \in (1, \infty) $ and $ p'$ its conjugate number. Then there exists a constant $C_\gamma$, independent of the dimension $n$,  such that 
\begin{equation}\label{eq1}
\int_0^\infty \int_{\RR^n} | {\bf \Gamma} e^{-tL} f (x)| |{\bf \Gamma} e^{-tL}g(x) |\,  dx\, dt\le C_\gamma \max(p,p')  \| f \|_{L^p} \| g \|_{L^{p'}}.
\end{equation}
\end{theorem}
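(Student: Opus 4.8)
Proof proposal for Theorem~\ref{th1} (the Dragi\v{c}evi\'c--Volberg estimate, $\alpha=\beta=0$).

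\medskip

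The plan is to realize the square-function integral as the norm of a \emph{bilinear} operator acting on $L^p(\RR^n)\times L^{p'}(\RR^n)$ and then to reduce matters, via Stein-type complex interpolation, to a single favourable exponent. Since \eqref{eq1} is claimed with a constant of the form $C_\gamma\max(p,p')$, it suffices by symmetry (the left side is symmetric in the roles of $f$ and $g$, with $p\leftrightarrow p'$) to prove it for $p\ge 2$, say, and then to obtain the linear-in-$p$ growth from an interpolation argument anchored at $p=p'=2$. First I would record the $L^2$ endpoint: for $p=p'=2$, the operator $L$ is self-adjoint and nonnegative on $L^2(\RR^n)$, and the form identity
\[
\int_0^\infty \big\| {\bf \Gamma}e^{-tL}f\big\|_{L^2}^2\, dt
= \tfrac14 \int_0^\infty \tfrac{d}{dt}\!\big(-\|e^{-tL}f\|_{L^2}^2\big)\, dt \cdot(\text{const})
= c\,\|f\|_{L^2}^2,
\]
coming from $\tfrac{d}{dt}\|e^{-tL}f\|_{L^2}^2 = -2\langle Le^{-tL}f,e^{-tL}f\rangle = -2(\|{\bf \Gamma}_1 e^{-tL}f\|_2^2+\|{\bf \Gamma}_2 e^{-tL}f\|_2^2)$ with ${\bf \Gamma}_1=A^{1/2}\nabla$, ${\bf \Gamma}_2=\sqrt V$ and the ellipticity bound $\|A^{1/2}\nabla u\|_2^2\ge\gamma\|\nabla u\|_2^2$. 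By Cauchy--Schwarz in $(t,x)$ this gives \eqref{eq1} for $p=2$ with a dimension-free constant depending only on $\gamma$.

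\medskip

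Next I would set up the analytic family. Fix $p\in(2,\infty)$, let $\theta\in(0,1)$ be chosen by $1/p=(1-\theta)/2+\theta/\infty=(1-\theta)/2$, i.e.\ $\theta=1-2/p$, and consider, for $z\in\overline{\mathbf S}$, the bilinear operator
\[
T_z(f,g) \;=\; \int_0^\infty \big( {\bf \Gamma}e^{-tL}f\big)\cdot\big({\bf \Gamma}e^{-tL}g\big)\,\varphi_z(t)\, \frac{dt}{t}\,,
\]
or rather a scalar version of it: test against $w$ bounded with compact support in $X_0=\RR^n$ and form $z\mapsto\int_{\RR^n} T_z(f,g)\,w\,dx$, where the analytic weight $\varphi_z$ interpolates between a trivial multiplier at $\Re z=0$ and the one that reproduces \eqref{eq1} at $\Re z=\theta$. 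The point is that $T_z$ maps $\mathcal C_c\times\mathcal C_c$ into $L^1_{\mathrm{loc}}(\RR^n)$ (indeed into $L^1$, using that $f,g$ are bounded with compact support so that $e^{-tL}f$ enjoys Gaussian-type decay and the $t$-integral converges), and that the map $z\mapsto\int T_z(f,g)w$ is analytic on $\mathbf S$ and continuous on $\overline{\mathbf S}$, so $\{T_z\}$ is an analytic family in the sense of Definition~\ref{DefAFAG}; the admissible-growth condition \eqref{eqn: 1.3.admis.growth} holds with $\gamma=0$ and $s=1$ because the integrand is controlled on compacts by $\|f\|_\infty\|g\|_\infty$ times a fixed finite integral, uniformly in $z\in\overline{\mathbf S}$. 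The boundary estimates are: at $\Re z=0$, a crude bound (Cauchy--Schwarz plus the $L^2$ endpoint above), giving $\|T_{iy}(f,g)\|_{L^{q_0}}\le B_0 M_0(y)\|f\|_{L^{2}}\|g\|_{L^{2}}$ with appropriate $q_0$; at $\Re z=1$, the \emph{input} estimate \eqref{eq1} \emph{for the exponent $2$} again (or for a convenient pair), so that the whole family is pinned by the known $p=2$ case. Applying Theorem~\ref{thm-multilinear} with $m=2$ then yields \eqref{eq1} at $z=\theta$, i.e.\ for the given $p$, with the constant $B_0^{1-\theta}B_1^{\theta}M(\theta)$; choosing $\varphi_z$ and the boundary data so that $M(\theta)$ grows at most linearly in $\theta^{-1}\sim p$ delivers the factor $\max(p,p')$.

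\medskip

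The main obstacle, and the reason this is not a one-line corollary, is that \eqref{eq1} is a \emph{diagonal} bilinear bound on $L^p\times L^{p'}$ rather than something one gets by interpolating two \emph{off-diagonal} bilinear bounds; pure Riesz--Thorin between $(p_0,p_0')$ and $(p_1,p_1')$ does not preserve the conjugacy relation unless the analytic family is engineered so that the target exponent $q$ of the product stays fixed (here $q=1$, since the left side of \eqref{eq1} is an $L^1$-type quantity) while $p^1,p^2$ vary with $1/p^1+1/p^2=1$ throughout. So the delicate point is the design of $\varphi_z$ (a single-variable analytic multiplier of the Laplace transform of $L$, of admissible growth in $\Im z$) together with the verification that $T_z$ genuinely lands in $L^1_{\mathrm{loc}}$ and that the boundary norms carry only a dimension-free, at-most-linear-in-$p$ dependence; once that bookkeeping is in place, Theorem~\ref{thm-multilinear} does the rest and the dimension-independence is inherited from the $p=2$ endpoint. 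In the full paper one in fact proves the stronger statement with $L^\alpha$, $L^\beta$ and $t^{\alpha+\beta}$ by the same scheme, taking $\varphi_z$ to encode the extra powers of $L$ and of $t$; the case $\alpha=\beta=0$ treated here is the base of that induction and already exhibits every structural feature of the argument.
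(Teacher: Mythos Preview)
There is a basic misreading here: the paper does \emph{not} prove Theorem~\ref{th1}. It is stated as a result of Dragi\v{c}evi\'c and Volberg \cite{DraVolb1} and is used as an \emph{input} to the proof of Proposition~\ref{pro2}. The logical flow is the reverse of what you describe: the $\alpha=\beta=0$ case is not ``the base of that induction'' but rather the known estimate which, combined with the multilinear interpolation Theorem~\ref{thm-multilinear}, yields the general $\alpha,\beta\ge 0$ case. The proof in \cite{DraVolb1} proceeds by Bellman-function (heat-flow) techniques, not by complex interpolation, and the dimension-free constant $C_\gamma\max(p,p')$ comes out of that machinery.

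Independently of this, your proposed argument has a genuine gap. You correctly obtain the $p=2$ endpoint from the form identity and Cauchy--Schwarz, but then your analytic family has no workable second endpoint. You write that at $\Re z=1$ you would use ``the input estimate \eqref{eq1} for the exponent $2$ again (or for a convenient pair)''; interpolating $L^2\times L^2\to L^1$ against itself yields nothing beyond $p=2$. To reach $L^p\times L^{p'}$ for $p>2$ by convex interpolation you would need an endpoint of the type $L^\infty\times L^1\to L^1$ (or $L^{p_1}\times L^{p_1'}\to L^1$ for some $p_1>p$), and no such estimate is available here---indeed the vertical square function ${\bf \Gamma}e^{-tL}$ is in general unbounded on $L^1$ and on $L^\infty$. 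The weight $\varphi_z$ you allude to is never specified, and no choice of it can manufacture the missing endpoint. This is exactly why the paper \emph{imports} Theorem~\ref{th1} rather than deriving it: its proof requires an idea (the Bellman function) that is orthogonal to Stein interpolation.
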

 The constant $C_\gamma$ can be taken to be $C \max(1, \frac{1}{\gamma})$ with $C$ an absolute constant. 

The aim of this section is to prove, under the same assumptions as before, the following result.  
\begin{proposition}\label{pro2}
Let $\alpha, \beta \in [0, \infty)$ and  $1<p<\infty$. Then there exists a constant $ C(\alpha, \beta, \gamma, p)$, independent of $n$,  such that
 \begin{equation}\label{eq2}
\int_0^\infty \int_{\RR^n} | {\bf \Gamma} L^\alpha e^{-tL} f (x)\cdot 
{\bf \Gamma} L^\beta e^{-tL}g(x) |\,  dx\, t^{\alpha + \beta} dt\le C(\alpha, \beta, \gamma, p) 
 \| f \|_{L^p} \| g \|_{L^{p'}}.
\end{equation}
\end{proposition}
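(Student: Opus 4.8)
The plan is to deduce Proposition~\ref{pro2} from Theorem~\ref{th1} by applying the multilinear (here bilinear) interpolation theorem, Theorem~\ref{thm-multilinear}, to an analytic family of bilinear operators that interpolates between the $\alpha=\beta=0$ endpoint of Dragicevic--Volberg and a trivial $L^2\times L^2$ estimate. First I would fix $\alpha,\beta\ge 0$ and, for $z$ in the closed strip, define a bilinear operator $T_z$ acting on a pair $(f,g)$ of compactly supported continuous functions on $\RR^n$ by a formula of the shape
\begin{equation*}
T_z(f,g)(s,x) = e^{(z-\theta_0)^2}\, s^{\alpha z/\theta_0}\, {\bf \Gamma} L^{\alpha z/\theta_0} e^{-sL}f(x)\, \cdot\, s^{\beta z/\theta_0}\, {\bf \Gamma} L^{\beta z/\theta_0} e^{-sL}g(x),
\end{equation*}
viewed as a function on the measure space $X_0 = (0,\infty)\times\RR^n$ with $d\mu_0 = ds\,dx$, where $\theta_0\in(0,1)$ is chosen so that the target exponents match at $z=\theta_0$; the Gaussian factor $e^{(z-\theta_0)^2}$ (or a similar entire function decaying on vertical lines) is inserted to tame the growth of $M_j(y)$ in the hypotheses of Theorem~\ref{thm-multilinear}. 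The point of writing $L^{\alpha z/\theta_0}$ is that at $z=\theta_0$ one recovers exactly $L^\alpha$ and the weight $s^{\alpha+\beta}$, so $T_{\theta_0}$ is the operator whose $L^1(X_0)$ norm we must control.

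Next I would verify the structural hypotheses of Definition~\ref{DefAFAG}. Analyticity of $z\mapsto \int_{X_0} T_z(f,g)\,w\,d\mu_0$ for bounded compactly supported $w$ on $(0,\infty)\times\RR^n$ follows from analyticity of the holomorphic functional calculus: for fixed $s>0$ the maps $z\mapsto L^{\alpha z/\theta_0}e^{-sL}f$ and $z\mapsto L^{\beta z/\theta_0}e^{-sL}g$ are analytic $L^2(\RR^n)$-valued (the semigroup $e^{-sL}$ smooths, $L^{\alpha z/\theta_0}e^{-sL/2}$ is bounded on $L^2$ with norm controlled by spectral calculus), hence so is their pointwise product paired against $w$, and one concludes by Morera/Fubini after checking local uniform integrability in $s$; the needed decay as $s\to 0$ and $s\to\infty$ comes from the analyticity estimates $\|L^{\sigma}e^{-sL}\|_{2\to 2}\lesssim s^{-\real\sigma}$ together with the weight $s^{(\alpha+\beta)\real z/\theta_0}$ and the Gaussian damping in $\imag z$. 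For the admissibility condition \eqref{eqn: 1.3.admis.growth} with $s=1$, I would fix a compact $K\subset (0,\infty)\times\RR^n$, so $s$ ranges in a compact subinterval $[s_-,s_+]\subset(0,\infty)$; on such a set the operators $L^{\alpha z/\theta_0}e^{-sL}$, $L^{\beta z/\theta_0}e^{-sL}$ are uniformly bounded on $L^2$ with norm growing at most like $e^{c|\imag z|}$ (from the imaginary powers of a self-adjoint operator, $\|L^{iy}\|_{2\to 2}\le 1$, combined with the fixed real power and the smoothing), so $\|T_z(f,g)\|_{L^1(K)}\le \|{\bf\Gamma}L^{\alpha z/\theta_0}e^{-sL}f\|_{L^2}\,\|{\bf\Gamma}L^{\beta z/\theta_0}e^{-sL}g\|_{L^2}$ integrated over $K$, giving a bound $C(f,g,K)e^{\gamma|\imag z|}$ with $\gamma=0$ actually (the Gaussian kills the $e^{c|\imag z|}$ growth), so in particular $\gamma<\pi$.

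Then I would set up the two endpoint estimates. At $\real z = \theta_0$ the target is the $\alpha=\beta=0$ weighted estimate: write $L^{\alpha(1+iy')/\cdots}$... more precisely, on the line $\real z = \theta_0$ one has $s^{\alpha+\beta}$ with the real parts in place and $L^{\alpha\theta_0/\theta_0 + i(\cdots)} = L^\alpha L^{i(\cdots)}$; absorbing the imaginary power (bounded on every $L^p$, $1<p<\infty$, with norm $\lesssim_{p} e^{\pi|y|/2}$ by the standard transference/spectral-multiplier bound, which is why the Gaussian factor is essential) and using Dragicevic--Volberg \eqref{eq1} gives $\|T_{\theta_0+iy}(f,g)\|_{L^1(X_0)}\le C_\gamma \max(p,p')\,e^{c' |y|^?}\,\|f\|_{L^p}\|g\|_{L^{p'}}$, but this is not quite the shape \eqref{eqn: 3.1.372}--\eqref{eqn: 3.1.373} which wants estimates on the two boundary lines $\real z=0$ and $\real z=1$, not on $\real z=\theta_0$. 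So I would instead pick two exponent pairs $(p_0,p_0')$ and $(p_1,p_1')$ with $1<p_0,p_1<\infty$ straddling the desired $p$ --- say $\tfrac1p = \tfrac{1-\theta}{p_0}+\tfrac\theta{p_1}$ for a suitable $\theta$ --- run Dragicevic--Volberg on the two boundary lines $\real z=0$ and $\real z=1$ (with imaginary powers $L^{i(\cdots)}$ absorbed into the $L^{p_0}$, $L^{p_1}$ bounds at cost $e^{c e^{\tau|y|}}$? no: cost is only exponential in $|y|$, which is fine, but must beat $\pi$; the clean route is to keep the Gaussian so $M_j(y)\le e^{-|y|^2+c|y|}$ and hypothesis \eqref{eqn:1.3.hfytr} holds with $\tau=0$), obtaining on $\real z=j\in\{0,1\}$ a bound $\|T_{j+iy}(f,g)\|_{L^1(X_0)}\le B_j M_j(y)\|f\|_{L^{p_j}}\|g\|_{L^{p_j'}}$ --- note the target space is $L^{q_j}(X_0)$ with $q_0=q_1=q=1$, so the exponent condition on $q$ in \eqref{1.3.indices-10-00} is automatically satisfied. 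Finally, applying Theorem~\ref{thm-multilinear} with these data yields $\|T_\theta(f,g)\|_{L^1(X_0)}\le B_0^{1-\theta}B_1^\theta M(\theta)\|f\|_{L^p}\|g\|_{L^{p'}}$, and unwinding the definition of $T_\theta$ (choosing $\theta_0=\theta$ so that $L^{\alpha\theta/\theta_0}=L^\alpha$ and the weight is $s^{\alpha+\beta}$) gives exactly \eqref{eq2} on continuous compactly supported $f,g$; a standard density argument extends it to all $f\in L^p$, $g\in L^{p'}$.

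The main obstacle I anticipate is the bookkeeping around the \emph{imaginary powers} $L^{iy}$ and matching everything to the precise hypothesis \eqref{eqn:1.3.hfytr}: the bound $\|L^{iy}\|_{L^p\to L^p}$ grows (at worst) exponentially in $|y|$ with a constant that can approach or exceed $\pi$ depending on $p$ and $\gamma$, so one cannot simply fold it into $M_j$; the remedy --- and the one real design choice in the proof --- is to build an entire damping factor like $e^{\delta(z-\theta)^2}$ with $\delta$ large enough into $T_z$, which forces $M_j(y)$ to decay like $e^{-\delta y^2}$ and hence trivially satisfies \eqref{eqn:1.3.hfytr} with $\tau=0$, while not affecting the value at $z=\theta$ since $e^{\delta\cdot 0}=1$. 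A secondary technical point is justifying that $z\mapsto {\bf\Gamma}L^{\sigma z}e^{-sL}f$ is genuinely $L^2(\RR^n)$-valued analytic when ${\bf\Gamma}=\nabla$ or $\sqrt V$ --- this uses that $\nabla L^{-1/2}$ and $\sqrt V L^{-1/2}$ are bounded on $L^2$ (form boundedness / Kato square root is not needed here, only $L^2$ boundedness, which is immediate from $\|\nabla u\|_2^2+\|\sqrt V u\|_2^2\le \gamma^{-1}\langle Lu,u\rangle = \gamma^{-1}\|L^{1/2}u\|_2^2$) together with the analyticity of $z\mapsto L^{1/2+\sigma z}e^{-sL}$ in the bounded operators on $L^2$.
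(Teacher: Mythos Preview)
Your opening sentence states the correct strategy---interpolate between the Dragi\v{c}evi\'c--Volberg endpoint and an elementary $L^2\times L^2$ estimate---and this is exactly what the paper does. But in the detailed construction you abandon that plan and propose instead to ``run Dragicevic--Volberg on the two boundary lines $\real z=0$ and $\real z=1$''. That does not work. With your parametrization $T_z$ carries the factor $(sL)^{\alpha z/\theta_0}$, so on the line $\real z=1$ the real power of $L$ is $\alpha/\theta_0\neq 0$; the operator there is ${\bf\Gamma}(sL)^{\alpha/\theta_0}e^{-sL}$, not ${\bf\Gamma}e^{-sL}$, and Theorem~\ref{th1} says nothing about it. You cannot absorb $(sL)^{\alpha/\theta_0}$ into the input function because that factor depends on the integration variable $s$. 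So as written the $\real z=1$ endpoint is missing.

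The fix is precisely the $L^2\times L^2$ estimate you mentioned at the start and then dropped: on the line $\real z=1$ take $p_1^1=p_1^2=2$ and use spectral calculus. The paper computes, for $\real z=r\in[0,1]$,
\[
\int_0^\infty \big\|{\bf\Gamma}(tL)^{\alpha' r}e^{-tL}h\big\|_{L^2}^2\,dt
\le \max\Big(1,\tfrac1\gamma\Big)\int_0^\infty\big\langle (tL)^{2\alpha'r+1}e^{-2tL}h,h\big\rangle\,\tfrac{dt}{t}
= \max\Big(1,\tfrac1\gamma\Big)\,2^{-2\alpha'r-1}\Gamma(2\alpha'r+1)\,\|h\|_{L^2}^2,
\]
and Cauchy--Schwarz in $(x,t)$ then gives a uniform $L^2\times L^2\to L^1(X_0)$ bound on $T_{1+iy}$. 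With $p_0^1=p_1\in(1,p)$, $p_0^2=p_1'$ on $\real z=0$ (via Theorem~\ref{th1} after peeling off the purely imaginary powers) and $p_1^1=p_1^2=2$ on $\real z=1$, the interpolation recovers the target exponent $p$ at $z=\theta$ with $\alpha'=\alpha/\theta$, $\beta'=\beta/\theta$.

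Two minor points. First, the Gaussian damping $e^{(z-\theta_0)^2}$ is unnecessary: the imaginary-power bound $\|L^{is}\|_{L^{p_1}\to L^{p_1}}\le C(p_1)e^{\frac{\pi}{2}|s|}$ gives $M_0(y)$ of order $e^{\lambda|y|}$, and any such function satisfies $\log M_0(y)=\lambda|y|\le c\,e^{\tau|y|}$ for, say, $\tau=1<\pi$ and $c$ large; you confused the exponential rate $\lambda$ of $M_0$ with the parameter $\tau$ in the double-exponential hypothesis \eqref{eqn:1.3.hfytr}. Second, for the admissibility condition the paper actually gets the stronger global bound $\|T_z(f,g)\|_{L^1(X_0)}\le C$ uniformly in $z\in\overline{\mathbf S}$ from the same $L^2$ square-function computation, which is cleaner than restricting to compact $K\subset X_0$.
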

This proposition can be viewed  as a weighted version  of the bilinear estimate stated in the previous theorem. More precisely, let
$\omega : (0, \infty)  \to (0, \infty)$ such that $\omega(t)  \sim  t^\eta$ for some $\eta > 0$. Then \eqref{eq2} can be rewritten as 
\begin{equation}\label{eq2-1}
\int_0^\infty \int_{\RR^n} | {\bf \Gamma}  e^{-tL} f (x)\cdot {\bf \Gamma}  e^{-tL}g(x) |\,  dx\, \omega(t)\, dt\le C(\alpha, \eta, \gamma, p)  \| L^{-\alpha}f \|_{L^p} \| L^{-(\eta-\alpha)}g \|_{L^{p'}} 
\end{equation}
for $\alpha \in [0, \eta]$.

\begin{proof}[Proof of Proposition \ref{pro2}]
Define 
$$ 
T_{\alpha, \beta}(f,g)(x,t) = {\bf \Gamma} (tL)^\alpha e^{-tL}f(x)
\cdot  {\bf \Gamma} (tL)^\beta e^{-tL}g(x).
$$
The above proposition can be rephrased as 
$$
 T_{\alpha, \beta} : L^p(\RR^n) \times L^{p'}(\RR^n) \to L^1(\RR^n \times (0, \infty), dx dt)
 $$
is a bounded bilinear operator with norm estimated by $C(\alpha, \beta, \gamma, p)$. 
 
For complex  $z$, we define the bilinear operator
 $$
  T_z (f,g)(x,t) = {\bf \Gamma} (tL)^{\alpha'z} e^{-tL}f(x)
 \cdot  {\bf \Gamma} (tL)^{\beta'z} e^{-tL}g(x),
 $$
 where $\alpha', \beta' \in [0, \infty)$ will be specified later. 
 
 We show that the family $(T_z)$ is analytic in the sense of Definition \ref{DefAFAG}. Let $f, g \in \mathcal C_c(\RR^n)$ and $w \in L^\infty(\RR^n \times (0,\infty))$ a bounded function with compact support $K$. We  prove that
 \begin{equation}\label{eqfunction}
 z \mapsto \int_0^\infty \int_{\RR^n} T_z(f,g)(x,t) w(x,t) \, dx\, dt
 \end{equation}
 is analytic on $\mathbf S$ and continuous on $\overline{\mathbf S}$. 
 
 Note that there exist a compact set $K_0$ of $\RR^n$ and $0 < a < b < \infty$ such that 
 $K \subset K_0 \times [a,b]$. This can be seen by taking $K_0 = p_1(K)$ and $[a,b] = p_2(K)$ where $p_1 : \RR^n \times (0,\infty) \to \RR^n$ and 
 $p_2 : \RR^n \times (0,\infty) \to (0, \infty)$ are the first and second projections. These functions are continuous and hence $p_1(K)$ and $p_2(K)$ are compact sets of $\RR^n$ and $(0, \infty)$, respectively. By arguing by contradiction, it is easy to see that $a > 0$. In particular,  the function in 
 \eqref{eqfunction}  coincides with
 $$ 
 z \mapsto \int_a^b \left\langle {\bf \Gamma} (tL)^{\alpha' z} e^{-tL} f, w(\cdot,t) {\bf \Gamma} (tL)^{\beta' z} e^{-tL} g \right\rangle_{L^2}\, dt.
 $$
 
 Note that by ellipticity and the fact that $V$ is nonnegative,
 $$
  \| \nabla u \|_{L^2}^2 \le \frac{1}{\gamma} \int_{\RR^n} L u \,\, \overline{u} \, dx =  \frac{1}{\gamma} \big\| L^{1/2} u \big\|_{L^2}^2 \quad {\rm and} \quad 
 \big\| \sqrt{V} u \big\|_{L^2}^2 \le \big\| L^{1/2} u \big\|_{L^2}^2.
 $$
 Hence
 \begin{equation}\label{eq1m}
 \| {\bf \Gamma} u \|_{L^2}^2 \le \max\Big(1, \frac{1}{\gamma}\Big) \big\| L^{1/2} u \big\|_{L^2}^2.
 \end{equation}
 
 Recall that for every $h \in D(L)$, the function $z \mapsto L^z h$ is analytic on $\mathbf S$ and continuous on $\overline {\mathbf S}$ (see e.g. \cite{Haase}, Proposition 3.1.1, b)). Since the operator  $\Gamma e^{-tL}$ is bounded on $L^2(\RR^n)$  for every $t > 0$ (see \eqref{eq1m}), it follows that  the function
$$ z \mapsto \left\langle {\bf \Gamma} (tL)^{\alpha' z} e^{-tL} f, w(\cdot,t) {\bf \Gamma} (tL)^{\beta' z} e^{-tL} g \right\rangle_{L^2}$$
is analytic on $\mathbf S$ and continuous on $\overline{\mathbf S}$. It remains to bound in a neighborhood of each $z_0  \in \overline{\mathbf S}$
 this function by some function $\psi(t)$ which is integrable on $[a,b]$ and then obtain the desired conclusion for the function in \eqref{eqfunction}.  

 By the Cauchy-Schwarz  inequality and  \eqref{eq1m} we write 
\begin{eqnarray}
&&\hspace{-1cm}  \left| \left\langle {\bf \Gamma} (tL)^{\alpha' z} e^{-tL} f, w(\cdot,t) {\bf \Gamma} (tL)^{\beta' z} e^{-tL} g \right\rangle_{L^2} \right| \label{eqabc}\\
&\le& \| w \|_{L^\infty} \left\| {\bf \Gamma} (tL)^{\alpha' {\real z}} e^{-tL}  (tL)^{i \alpha'{\imag z}}f  \right\|_{L^2} 
\left\| {\bf \Gamma} (tL)^{\beta' {\real z}} e^{-tL}  (tL)^{i \beta'{\imag z}}g \right\|_{L^2} \nonumber\\
&\le& \| w \|_{L^\infty} \max\Big(1, \frac{1}{\gamma}\Big)\left\| L^{1/2} (tL)^{\alpha' {\real z}} e^{-tL}  (tL)^{i \alpha'{\imag z}}f  \right\|_{L^2} 
\left\| L^{1/2} (tL)^{\beta' {\real z}} e^{-tL}  (tL)^{i \beta'{\imag z}}g \right\|_{L^2} .\nonumber
\end{eqnarray}
The standard functional calculus for self-adjoint operators, i.e., 
$$ 
\| \phi(L) h \|_{L^2} \le \sup_{\lambda > 0} | \phi(\lambda) | \| h \|_{L^2},
$$
 gives
\begin{equation}\label{eqfc}
 \| (tL)^{\alpha' \real z} e^{-tL} h \|_{L^2} \le e^{ { \alpha' \real z}(\log({ \alpha' \real z}) - 1)} \|h\|_{L^2}.
 \end{equation}
Clearly the term on the right hand side of \eqref{eqfc} is uniformly bounded in $z$ in a bounded neighborhood $W_0$ of a fixed $z_0  \in \overline{\mathbf S}$.
It follows from this and the estimates in \eqref{eqabc} that 
$$\left| \left\langle {\bf \Gamma} (tL)^{\alpha' z} e^{-tL} f, w(\cdot,t) {\bf \Gamma} (tL)^{\beta' z} e^{-tL} g \right\rangle_{L^2} \right|
\le
\frac{C \|w\|_{L^\infty}} {t} \| f \|_{L^2} \| g \|_{L^2}, \quad z \in W_0.
$$
This function is integrable on $[a,b]$  and hence by the dominated convergence theorem we obtain that the function in \eqref{eqfunction} is analytic at $z_0 \in {\mathbf S}$ and continuous at $z_0 \in \overline{\mathbf S}$.

 Next, we prove the admissibility condition. For $f, g \in L^2(\RR^n)$ and $z = r+ is \in \overline{\mathcal S}$,
 \begin{eqnarray*}
 &&\hspace{-1cm} \left\| T_z(f,g) \right\|_{L^1(\RR^n \times (0, \infty))} \\
 &=& \int_0^\infty \int_{\RR^n} | {\bf \Gamma} (tL)^{\alpha' z} e^{-tL}f(x)
 \cdot  {\bf \Gamma} (tL)^{\beta'z} e^{-tL}g(x) |\, dx\,dt\\
 &= & \int_0^\infty \int_{\RR^n} | {\bf \Gamma} (tL)^{\alpha'r } e^{-tL} L^{is \alpha'} f 
 \cdot  {\bf \Gamma} (tL)^{r\beta'} e^{-tL} L^{is\beta'} g |\, dx\,dt\\
 &\le& \left\| \left( \int_0^\infty |  {\bf \Gamma} (tL)^{\alpha'r} e^{-tL} L^{is\alpha'} f |^2\, dt  \right)^{1/2} \right\|_{L^2}  \left\| \left( \int_0^\infty |  {\bf \Gamma} (tL)^{\beta'r} e^{-tL} L^{is\beta'} f |^2\, dt  \right)^{1/2} \right\|_{L^2}.
 \end{eqnarray*}
 We estimate the latest terms using the standard functional calculus for the self-adjoint operator $L$ on $L^2(\RR^n)$.  Using \eqref{eq1m} we have 
{\allowdisplaybreaks \begin{eqnarray*}
 &&\hspace{-1cm} \left\| \left( \int_0^\infty |  {\bf \Gamma} (tL)^{\alpha'r} e^{-tL} L^{is\alpha'} f |^2\, dt  \right)^{1/2} \right\|_{L^2}^2  \\
 &=& \int_0^\infty \left\| {\bf \Gamma} (tL)^{\alpha'r} e^{-tL} L^{is\alpha'} f  \right\|_{L^2}^2 \, dt\\
 &\le& \max\Big(1, \frac{1}{\gamma}\Big) \int_0^\infty \left\| L^{1/2} (tL)^{\alpha'r} e^{-tL} L^{is\alpha'} f  \right\|_{L^2}^2 \, dt\\
 &=& \max\Big(1, \frac{1}{\gamma}\Big) \int_0^\infty \left\langle L^{1/2} (tL)^{\alpha'r} e^{-tL} L^{is\alpha'} f , L^{1/2} (tL)^{\alpha'r} e^{-tL} L^{is\alpha'} f \right\rangle_{L^2} \, dt\\
 &=& \max\Big(1, \frac{1}{\gamma}\Big) \int_0^\infty \left\langle  (tL)^{2\alpha'r +1} e^{-2tL} L^{is\alpha'} f,  L^{is\alpha'} f \right\rangle_{L^2}\, \frac{dt}{t}\\
 &=&\max\Big(1, \frac{1}{\gamma}\Big) \left\langle \int_0^\infty  (tL)^{2\alpha'r +1} e^{-2tL} L^{is\alpha'} f \, \frac{dt}{t},  L^{is\alpha'} f \right\rangle_{L^2}\\
 &\le& \max\Big(1, \frac{1}{\gamma}\Big)\left\| \int_0^\infty   (tL)^{2\alpha'r +1} e^{-2tL} L^{is\alpha'} f \, \frac{dt}{t} \right\|_{L^2} \left\| L^{is\alpha'} f \right\|_{L^2}.
 \end{eqnarray*}
 }
 Using again the functional calculus, we have  $\| L^{is\alpha'} f \|_2 = \| f \|_2$ and 
 \begin{eqnarray*}
  \left\| \int_0^\infty  (tL)^{2\alpha'r +1} e^{-2tL} L^{is\alpha'} f \, \frac{dt}{t} \right\|_{L^2} 
 &\le& \sup_{\lambda > 0} \left| \int_0^\infty  (t\lambda)^{2\alpha'r +1} e^{-2t\lambda} \,  \frac{dt}{t} \right|\,   \| L^{is\alpha'} f \|_{L^2} \\
 &=&  \int_0^\infty  t^{2\alpha'r} e^{-2t} \, dt\,    \|  f \|_{L^2}\\
 &=&  2^{-2\alpha' r -1}  \Gamma(2\alpha' r +1)\, \| f \|_{L^2}.
 \end{eqnarray*}
 Thus we obtain for all $z = r+ is \in \overline{\mathcal S}$
 \begin{equation}\label{eq3}
 \| T_z(f,g) \|_{L^1(\RR^n \times (0, \infty))} \le  
 \f{ \max(1, \frac{1}{\gamma}) }{ 2^{ (\alpha' + \beta') r +1} } \sqrt{ \Gamma(2 \alpha' r +1) \Gamma(2\beta' r + 1)}\,  \| f \|_{L^2} \|g \|_{L^2}.
 \end{equation}
 In particular,
  \begin{equation}\label{eq4}
 \| T_z(f,g) \|_{L^1(\RR^n \times (0, \infty))} \le \max\Big(1, \frac{1}{\gamma}\Big)
  \sqrt{ \Gamma(2 \alpha'  +1) \Gamma(2\beta'  + 1)}\,  \| f \|_{L^2} \|g \|_{L^2}
 \end{equation}
   for all $f, g \in L^2(\RR^n)$ and all $z \in \overline{\mathcal S}$. This proves that the analytic family of bilinear operators $T_z$ is of admissible  growth in the sense of Definition \ref{DefAFAG}. 
   
   The particular case of \eqref{eq3} for  $z = 1+ is$ yields
   \begin{equation}\label{eq5}
 \| T_{1+is}(f,g) \|_{L^1(\RR^n \times (0, \infty))} \le \f{ \max(1, \frac{1}{\gamma}) }{ 
  2^{(\alpha' + \beta')  +1}}  \sqrt{ \Gamma(2 \alpha'  +1) \Gamma(2\beta'  + 1)}\,  \| f \|_{L^2} \|g \|_{L^2}.
 \end{equation}

Next, we estimate the $L^1$-norm of $T_{is}(f,g)$. Let $p_1 \in (1, 2)$ be a fixed number and let $f \in L^{p_1}(\RR^n)$ and $g \in L^{p_1'}(\RR^n)$. By Theorem \ref{th1} we obtain 
\begin{eqnarray*}
\| T_{is}(f,g) \|_{L^1(\RR^n \times (0, \infty))} 
 &=& \int_0^\infty \int_{\RR^n} | {\bf \Gamma} e^{-tL} L^{is \alpha'} f(x)\cdot 
  {\bf \Gamma}  e^{-tL} L^{is \beta'}g(x) |\, dx\,dt\\
&\le& C_\gamma p_1' \| L^{is\alpha'} f \|_{L^{p_1}} \| L^{is\beta'} g \|_{L^{p_1'}}.
\end{eqnarray*}
Since the semigroup $(e^{-tL})$ is sub-Markovian and symmetric, $L$ has a holomorphic functional calculus on $L^q(\RR^n)$ for all 
$q \in (1, \infty)$ (cf. \cite{Cowling}, or \cite{Carbonaro}). For imaginary  powers, it follows from  these last two references that there exists a constant $C(p_1)$, independent of $n$, such that for all $s \in \RR$,
\begin{equation}\label{eq6} 
\| L^{is\alpha'} f \|_{L^{p_1} } \le C(p_1) e^{\frac{\pi}{2} |s| \alpha'} \| f \|_{L^{p_1}}.
\end{equation}
Therefore,
\begin{equation}\label{eq7}
\| T_{is}(f,g) \|_{L^1(\RR^n \times (0, \infty))} \le C(\gamma, p_1) e^{\frac{\pi}{2}(\alpha' + \beta') |s|} \| f \|_{L^{p_1}} \| g \|_{L^{p_1'}}.
\end{equation}

We are now in the position  to apply Theorem \ref{thm-multilinear}. It follows from \eqref{eq5} and \eqref{eq7} that for $\theta \in (0, 1)$ and $\frac{1}{p_\theta} = \frac{\theta}{2} + \frac{1-\theta}{p_1}$ we have 
 \begin{equation}\label{eq8}
 \| T_{\theta}(f,g) \|_{L^1(\RR^n \times (0, \infty))} \le c_\theta M(\theta) 
 \| f \|_{L^{ p_\theta} } \| g \|_{L^{ p_\theta'}} 
 \end{equation}
 with
 $$
 c_\theta = \left( \max\Big(1, \frac{1}{\gamma}\Big) 2^{-\alpha' -\beta'-1} \sqrt{ \Gamma(2 \alpha'  +1) \Gamma(2\beta'  + 1)} \right)^\theta C(\gamma,p_1)^{1-\theta}
 $$
 and 
 $$ 
 M(\theta) = \exp\left\{ \frac{\sin(\pi \theta)}{2} \frac{\pi}{2}(\alpha' + \beta') \int_{-\infty}^{+\infty} \frac{  |s|}{\cosh(\pi s) + \cos(\pi \theta)} \, ds \right\}.
 $$
Finally, for any $p \in (1, 2)$ we choose $p_1 < p$,  $\alpha' = \frac{\alpha}{\theta}$, $\beta' = \frac{\beta}{\theta}$ and set $\theta = \frac{p-p_1}{2-p_1}\frac{2}{p}$ so that $p_\theta = p$ and $T_\theta = T_{\alpha, \beta}$. The proposition follows from \eqref{eq8}.  
 \end{proof}
 
 \begin{remark}
It is an interesting question to understand  for which functions $F$ and $G$ one has 
 $$
  \int_0^\infty \int_{\RR^n} | {\bf \Gamma} F(tL)  f (x)\cdot {\bf \Gamma} G(tL) g(x) |\,  dx\, dt\le C(F,G, p)  \| f \|_{L^p} \| g \|_{L^{p'}}.
  $$
Note that the term on the left hand side is bounded by the product  
$$
\left\| \left(\int_0^\infty | {\bf \Gamma} F(tL) f |^2\, dt \right)^{1/2} \right\|_{L^p} \left\| \left(\int_0^\infty | {\bf \Gamma} G(tL) g |^2\, dt \right)^{1/2} \right\|_{L^{p'}}.
$$
The Littlewood-Paley-Stein functional $\left(\int_0^\infty | {\bf \Gamma} F(tL) f |^2\, dt \right)^{1/2}$
 is bounded on $L^p(\RR^n)$ for $p \in (1,2]$ as soon as $F$ is holomorphic in a certain  sector (with angle depending on $p$) and decays faster that $\frac{1}{\sqrt{|z|}}$ at $\infty$ (see \cite{CometxOuhabaz}). Thus the first term in the above product is fine for $p \in (1, 2]$. However the second term could  be unbounded on $L^{p'}(\RR^n)$ even if $L = \Delta + V$ ($V \not= 0$)  and $G(z) = e^{-z}$. See again \cite{CometxOuhabaz}. 
\end{remark}

\section{Appendix: Log-subharmonic   functions on the plane}\label{sec4}
A locally integrable function $f$ on an open subset $ O$ of 
the complex plane with values in $[-\nf,\nf)$ is called 
subharmonic if it is upper semicontinuous, i.e., 
$
\limsup_{w\to z} f(w) \le f(z)
$
for every $z\in O$ and satisfies 
\bee\lab{MVPSEMINGRAN}
f(z) \le \f{1}{|B(z,r)|} \int_{B(z,r)} f(w)\, dw 
\eee
for any $z\in O$ and every $r>0$ such that $B(z,r)\subset O$. If $f\in \mathcal C^2$, then 
the above condition is equivalent to $\De f\ge 0$.  A function is called log-subharmonic 
if it is nonnegative and its logarithm is subharmonic.

\begin{lemma}\lab{logsubh}
The sum of two log-subharmonic functions is log-subharmonic. 
\end{lemma}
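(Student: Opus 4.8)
The plan is to show that if $f$ and $g$ are log-subharmonic on an open set $O\subseteq\CC$, then $f+g$ is nonnegative (which is immediate since $f,g\ge 0$) and $\log(f+g)$ is subharmonic. The key analytic fact I would use is the sub-mean-value characterization together with the following elementary inequality: for nonnegative reals $a,b,c,d$ one has $(a+b)(c+d)\ge (\sqrt{ac}+\sqrt{bd})^2$, equivalently $ac+bd+ad+bc \ge ac+bd+2\sqrt{abcd}$, which is just $ad+bc\ge 2\sqrt{abcd}$, i.e.\ AM--GM. This is the multiplicative analogue of what makes the sum of log-subharmonic functions log-subharmonic, and it is the crux of the argument.

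First I would handle upper semicontinuity: since $f$ and $g$ are nonnegative and upper semicontinuous, $f+g$ is upper semicontinuous, and $\log$ is continuous and nondecreasing on $[0,\infty)$ (with value $-\infty$ at $0$), so $\log(f+g)$ is upper semicontinuous as a composition. Next, fix $z_0\in O$ and $r>0$ with $\overline{B(z_0,r)}\subset O$. Because $\log f$ and $\log g$ are subharmonic, the sub-mean-value inequality gives
\[
\log f(z_0)\le \frac{1}{|B(z_0,r)|}\int_{B(z_0,r)}\log f(w)\,dw,\qquad
\log g(z_0)\le \frac{1}{|B(z_0,r)|}\int_{B(z_0,r)}\log g(w)\,dw,
\]
hence, writing $\Ave$ for the average over $B(z_0,r)$,
\[
f(z_0)\le \exp\Big(\Ave \log f\Big),\qquad g(z_0)\le \exp\Big(\Ave \log g\Big).
\]
Now I would invoke Jensen's inequality (concavity of $\log$) in the form $\exp(\Ave\, h)\le \Ave\, e^{h}$ applied to $h=\log f$ and $h=\log g$, which recovers the ordinary sub-mean-value property $f(z_0)\le \Ave f$ and $g(z_0)\le \Ave g$; adding these gives $(f+g)(z_0)\le \Ave(f+g)$. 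But this only shows $f+g$ is subharmonic, not that its \emph{logarithm} is. To get log-subharmonicity I need the sharper step.

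The sharper step is where the AM--GM inequality above enters, and this is the main obstacle. The clean way is to use the known fact (from the theory of subharmonic functions) that a nonnegative upper semicontinuous function $u$ is log-subharmonic if and only if $u^{p}$ is subharmonic for every $p>0$, equivalently if and only if $\|u\|_{L^p(B(z_0,r))}$-type averages behave correctly; alternatively, one uses the characterization that $u$ is log-subharmonic iff for every harmonic function $v$ on a subdomain, $ue^{v}$ satisfies the sub-mean-value inequality, or iff $u=|F|$ locally for analytic $F$ in the smooth case. The most self-contained route: it suffices to prove $(f+g)^{t}$ is subharmonic for all $t\in(0,1)$ (for $t\ge1$ this then follows, or one argues directly). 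Using $f^{t}$ and $g^{t}$ subharmonic (since $\log f^t = t\log f$ is subharmonic) and the elementary inequality $(a+b)^{t}\le a^{t}+b^{t}$ for $t\in(0,1)$ together with $(a+b)^t \ge \max(a^t,b^t)$ does not quite close it, so instead I would use the two-parameter convexity trick: for each fixed $\lambda\in(0,1)$, $\lambda^{-\lambda}(1-\lambda)^{-(1-\lambda)} f^{\lambda}g^{1-\lambda}$ is subharmonic (product of powers, via $\log$ additivity), and $f+g = \sup_{0<\lambda<1}\lambda^{-\lambda}(1-\lambda)^{-(1-\lambda)}f^{\lambda}g^{1-\lambda}$ by optimizing $\lambda\mapsto \lambda^{-\lambda}(1-\lambda)^{-(1-\lambda)}a^{\lambda}b^{1-\lambda}$ (the maximum over $\lambda$ of $a^\lambda b^{1-\lambda}$ weighted this way equals $a+b$). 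However a supremum of subharmonic functions need not be subharmonic; so instead I would verify the sub-mean-value inequality for $\log(f+g)$ directly: at $z_0$, pick the optimal $\lambda=\lambda(z_0)$ so that $(f+g)(z_0)=\lambda^{-\lambda}(1-\lambda)^{-(1-\lambda)}f(z_0)^{\lambda}g(z_0)^{1-\lambda}$, apply log-subharmonicity of $f$ and $g$ to bound $\log f(z_0),\log g(z_0)$ by their averages, and then bound the resulting $\lambda$-weighted combination of averages of $\log f$ and $\log g$ by the average of $\log\big(\lambda^{-\lambda}(1-\lambda)^{-(1-\lambda)}f^{\lambda}g^{1-\lambda}\big)\le \log(f+g)$ pointwise under the integral. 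This last chain is exactly the content of the argument and I expect the bookkeeping around the $-\infty$ values of $\log$ (when $f$ or $g$ vanishes) to be the only delicate point, handled by noting $f+g$ vanishes only where both do.
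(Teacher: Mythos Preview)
Your final argument is correct: fix $\lambda_0=f(z_0)/(f(z_0)+g(z_0))$ so that $(f+g)(z_0)=\lambda_0^{-\lambda_0}(1-\lambda_0)^{-(1-\lambda_0)}f(z_0)^{\lambda_0}g(z_0)^{1-\lambda_0}$, bound $\log f(z_0)$ and $\log g(z_0)$ by their averages, and then use the pointwise inequality $\lambda_0^{-\lambda_0}(1-\lambda_0)^{-(1-\lambda_0)}f(w)^{\lambda_0}g(w)^{1-\lambda_0}\le (f+g)(w)$ (weighted AM--GM with the \emph{fixed} $\lambda_0$) under the integral. The earlier detours through $(f+g)^t$ and the supremum representation are unnecessary once you see this, and the vanishing cases are handled as you say.

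The paper's proof is a compact repackaging of exactly the same idea. It sets $\varphi(x,y)=\log(e^x+e^y)$, notes that $\varphi$ is increasing in each variable and convex on $\RR^2$, and writes $\log(f+g)=\varphi(\log f,\log g)$; monotonicity pushes the sub-mean-value inequalities for $\log f$ and $\log g$ inside $\varphi$, and Jensen's inequality for the convex $\varphi$ pulls the average back outside. Your argument is the supporting-hyperplane version of this Jensen step: the affine map $(x,y)\mapsto \lambda_0 x+(1-\lambda_0)y-\lambda_0\log\lambda_0-(1-\lambda_0)\log(1-\lambda_0)$ is the tangent plane to $\varphi$ at $(\log f(z_0),\log g(z_0))$, and your pointwise AM--GM bound is precisely $\varphi\ge\text{(its tangent)}$. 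The paper's formulation is shorter and absorbs the $f(z_0)=0$ or $g(z_0)=0$ cases automatically by allowing arguments in $[-\infty,\infty)$; your route makes the underlying mechanism more explicit at the cost of some bookkeeping.
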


\begin{proof}
Let $\vp(x,y)= \log (e^x+e^y)$ defined on $\RR^2$. Then $\vp$ is obviously 
increasing in each variable and is a convex function of both  variables.

Suppose that $F,G$ are subharmonic functions on an open subset of the
complex  plane. Then the 
fact that $\vp$ is increasing in each variable and Jensen's inequality (which can be used
since $\vp$ is convex) gives
\begin{eqnarray*}
\vp(F(z), G(z)) & \le&  \vp \bigg( \f{1}{|B(z,r)|} \int_{B(z,r)}  F(w)\, dw \,\,\, , \,\,
\f{1}{|B(z,r)|} \int_{B(z,r)}  G(w)\, dw  \bigg)  \\
& \le&  \f{1}{|B(z,r)|} \int_{B(z,r)} \vp(  F(w),G(w))\, dw\, ,
\end{eqnarray*}
which implies that $\vp(F(z), G(z))$ is subharmonic. 
Now let $f,g$ be log-subharmonic functions. Writing $f=e^F$ and $g=e^G$, 
then $\log (f+g) = \vp(F, G)$. But $\vp(F, G)$   was shown to be subharmonic, thus 
  $\log (f+g)$ is also subharmonic. 
 \end{proof}

We review a couple of facts from the theory of analytic functions with values in Banach spaces. 
Let $\mathcal B$ be a Banach space and let $\mathbf f$ be a mapping from an open 
subset $U $ of $\CC$ to $\mathcal B$. We say that $\mathbf f$ is analytic 
if
$$
\mathbf f'(z_0) =\lim_{z\to z_0} \f{\mathbf f(z)- \mathbf f(z_0)}{z-z_0}
$$
exists in the norm of $\mathcal B$. 

\begin{theorem}\label{BanachAnalytic}
Let $\mathbf f$ be a mapping from an  open 
subset $U $ of $\CC$ to a Banach space $\mathcal B$. Then $\mathbf f$  is 
analytic if and only if   
 for every bounded linear functional $\Lambda$ in $\mathcal B$ we have 
$$
\lim_{z\to z_0}\Lambda\bigg(   \f{\mathbf f(z)- \mathbf f(z_0)}{z-z_0}\bigg) 
$$
exists in $\CC$.  
\end{theorem}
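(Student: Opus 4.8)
The plan is to treat the two implications separately; the forward one is immediate and all the content lies in the converse. If $\mathbf f$ is analytic at $z_0$ with derivative $\mathbf f'(z_0)\in\mathcal B$, then for any bounded linear functional $\Lambda$ the linearity and continuity of $\Lambda$ give $\Lambda\big((\mathbf f(z)-\mathbf f(z_0))/(z-z_0)\big)\to\Lambda(\mathbf f'(z_0))$, so the scalar limit exists. For the converse I would argue in three steps: first, weak analyticity forces $\mathbf f$ to be locally norm-bounded; second, a Cauchy-type estimate for the $\mathcal B$-valued difference quotients obtained one functional at a time; third, an appeal to the completeness of $\mathcal B$ to extract the limit.

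For the first step, note that the hypothesis says each scalar function $\Lambda\circ\mathbf f$ is complex-differentiable at every point of $U$, hence holomorphic on $U$ and in particular continuous. Fix $z_0$ and a closed disc $\overline{B(z_0,\rho)}\subset U$. For each $\Lambda$ the continuous function $\Lambda\circ\mathbf f$ is bounded on the compact disc, so the family $\{\mathbf f(z):|z-z_0|\le\rho\}$, regarded through the canonical embedding of $\mathcal B$ into its bidual, is pointwise bounded on the Banach space $\mathcal B^*$. The uniform boundedness principle then gives $M:=\sup_{|z-z_0|\le\rho}\|\mathbf f(z)\|_{\mathcal B}<\infty$.

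For the second step, let $\gamma$ be the positively oriented circle $|\zeta-z_0|=\rho$ and let $z,w\in B(z_0,\rho/2)$. Applying the scalar Cauchy integral formula to the holomorphic function $\Lambda\circ\mathbf f$ and using the identity $\frac{1}{\zeta-z}-\frac{1}{\zeta-w}=\frac{z-w}{(\zeta-z)(\zeta-w)}$, one gets, for any $\Lambda$ with $\|\Lambda\|\le1$,
\[ \Lambda\!\left(\frac{\mathbf f(z)-\mathbf f(z_0)}{z-z_0}-\frac{\mathbf f(w)-\mathbf f(z_0)}{w-z_0}\right)=\frac{z-w}{2\pi i}\int_\gamma\frac{\Lambda(\mathbf f(\zeta))}{(\zeta-z)(\zeta-w)(\zeta-z_0)}\,d\zeta. \]
On $\gamma$ we have $|\zeta-z_0|=\rho$ and $|\zeta-z|,|\zeta-w|\ge\rho/2$, and $|\Lambda(\mathbf f(\zeta))|\le M$; estimating the integral by length times supremum bounds the right-hand side by $\frac{4M}{\rho^{2}}|z-w|$, independently of $z,w$ and of $\Lambda$ in the unit ball. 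Taking the supremum over $\|\Lambda\|\le1$ and invoking $\|x\|_{\mathcal B}=\sup_{\|\Lambda\|\le1}|\Lambda(x)|$ (Hahn--Banach) yields
\[ \left\|\frac{\mathbf f(z)-\mathbf f(z_0)}{z-z_0}-\frac{\mathbf f(w)-\mathbf f(z_0)}{w-z_0}\right\|_{\mathcal B}\le\frac{4M}{\rho^{2}}\,|z-w|. \]

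The third step is then automatic: this inequality shows the difference quotients form a Cauchy net as $z\to z_0$, so by completeness $\mathbf f'(z_0):=\lim_{z\to z_0}(\mathbf f(z)-\mathbf f(z_0))/(z-z_0)$ exists in $\mathcal B$; since $z_0\in U$ was arbitrary, $\mathbf f$ is analytic on $U$. I expect the \textbf{main obstacle} to be the first step, i.e.\ the passage from scalar (weak) boundedness to norm boundedness of $\{\mathbf f(z)\}$: this is exactly where the Banach-space structure enters, through the uniform boundedness principle, and nothing softer suffices. Everything after that is the classical Cauchy-estimate computation, done functional by functional and then globalized by Hahn--Banach; the only point needing mild care is that $\Lambda\circ\mathbf f$ is genuinely holomorphic on the open set $U$ (so that the Cauchy formula applies on interior discs), which holds because the differentiability hypothesis is imposed at every point of $U$.
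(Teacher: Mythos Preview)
Your proof is correct and follows the classical argument (often attributed to Dunford) for the equivalence of weak and strong analyticity of Banach-space-valued functions. However, the paper itself does not prove Theorem~\ref{BanachAnalytic}; it merely states it in the Appendix as a known background fact, to be invoked when verifying the hypotheses of Lemma~\ref{SWTohoku}. So there is no ``paper's own proof'' to compare against --- you have supplied the standard proof where the paper simply cites the result.
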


Log-subharmonic can be generated from $L^1(X)$-valued analytic functions
in terms of the subsequent   lemma. 

\begin{lemma}\lab{SWTohoku} \cite[Lemma 2]{SteinWeissTohoku}
Let $(X,\mu)$ be a measure space 
with $\mu(X)<\nf$ and let $V $ be a complex-valued function 
defined on $X\times S$  such that   the  mapping  
 $z\mapsto V(\cdot ,z)$ from $S$ to $L^1(X)$ is 
a Banach-valued analytic function.  Then the function
\[
z\mapsto F(z)=\int_X |V(x,z)|^q\, d\mu(x)
\]
is log-subharmonic for any $0<q\le 1$. 
\end{lemma}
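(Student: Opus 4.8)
The plan is to verify that $F$ is continuous on $S$ (so that $\log F$ is upper semicontinuous) and that $\log F$ satisfies the sub--mean--value inequality over every sufficiently small circle; integrating the latter in the radius gives inequality \eqref{MVPSEMINGRAN} for $\log F$, so $\log F$ is subharmonic, i.e.\ $F$ is log--subharmonic. Continuity of $F$ is where the hypotheses $\mu(X)<\infty$ and $0<q\le 1$ enter: for complex $a,b$ one has $\big|\,|a|^q-|b|^q\,\big|\le |a-b|^q$, so by H\"older's inequality with exponents $1/q$ and $1/(1-q)$,
\[
|F(z)-F(z')|\le \int_X\big|V(x,z)-V(x,z')\big|^q\,d\mu(x)\le \mu(X)^{1-q}\,\big\|V(\cdot,z)-V(\cdot,z')\big\|_{L^1(X)}^{\,q},
\]
which tends to $0$ as $z'\to z$ because a Banach--space--valued analytic map is norm continuous; the same estimate with $z'$ fixed also shows $F(z)<\infty$ for every $z$. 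We may assume $F\not\equiv 0$ on the component of $S$ under consideration, otherwise there is nothing to prove (with the usual convention for the constant $-\infty$ function).

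The key step is to upgrade the \emph{weak} (Banach--valued) analyticity of $z\mapsto V(\cdot,z)$ to an honest \emph{pointwise} statement. Cover $S$ by countably many open disks $D$ with $\overline D\subset S$; on each such $D$ the $L^1(X)$--valued analytic map has a Taylor expansion $V(\cdot,z)=\sum_{n\ge 0}a_n^D(z-z_0)^n$ converging in $L^1(X)$, with $\sum_n\|a_n^D\|_{L^1}r^n<\infty$ for some $r$ exceeding the radius of $D$. By Tonelli this forces $\sum_n|a_n^D(x)|r^n<\infty$ for $\mu$--a.e.\ $x$, so off a $\mu$--null set the scalar series $\sum_n a_n^D(x)(z-z_0)^n$ defines a genuine analytic function of $z$ on $D$, and for each fixed $z$ its sum agrees $\mu$--a.e.\ with $V(x,z)$ (both equal the $L^1$--limit of the partial sums). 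Running this over the countable cover, choosing a countable dense subset of $S$, and using that two analytic functions agreeing on a dense subset of an open set coincide, one produces (after discarding a single $\mu$--null set of $x$'s and redefining $V$ there) a representative of $V$ for which $z\mapsto V(x,z)$ is analytic on $S$ for \emph{every} $x$, without altering the $L^1$ class of $V(\cdot,z)$ and hence without altering $F$. For such a representative $z\mapsto q\log|V(x,z)|$ is subharmonic for every $x$, whence for any disk $B(z_0,\rho)\subset S$
\[
|V(x,z_0)|^q\le \exp\!\Big(\tfrac{1}{2\pi}\int_0^{2\pi}\log|V(x,z_0+\rho e^{i\theta})|^q\,d\theta\Big),
\]
trivially so when $V(x,z_0)=0$.

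It remains to integrate this pointwise inequality up to $F$; this part is soft and no longer uses $q\le 1$. Fix $z_0$ with $F(z_0)>0$ and, by continuity of $F$, choose $\rho$ so small that $F>0$ on $\overline{B(z_0,\rho)}$. Integrating the last display in $x$, then writing $\log|V(x,w_\theta)|^q=\big[\log|V(x,w_\theta)|^q-\log F(w_\theta)\big]+\log F(w_\theta)$ with $w_\theta=z_0+\rho e^{i\theta}$, pulling out the $x$--independent factor, and applying Jensen's inequality for the probability measure $d\theta/2\pi$ (convexity of $t\mapsto e^t$) followed by Tonelli's theorem, we get
\[
F(z_0)\le \exp\!\Big(\tfrac{1}{2\pi}\int_0^{2\pi}\log F(w_\theta)\,d\theta\Big)\cdot\frac{1}{2\pi}\int_0^{2\pi}\frac{1}{F(w_\theta)}\Big(\int_X|V(x,w_\theta)|^q\,d\mu(x)\Big)d\theta,
\]
and the last double integral equals $\frac{1}{2\pi}\int_0^{2\pi}1\,d\theta=1$. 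Hence $\log F(z_0)\le \frac{1}{2\pi}\int_0^{2\pi}\log F(z_0+\rho e^{i\theta})\,d\theta$ for all small $\rho$ (trivially when $F(z_0)=0$); together with the upper semicontinuity of $\log F$ and integration in $\rho$, this is exactly \eqref{MVPSEMINGRAN} for $\log F$, so $\log F$ is subharmonic and $F$ is log--subharmonic.

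I expect the main obstacle to be the second step: converting the integral (weak) analyticity hypothesis into a.e.\ pointwise analyticity of $z\mapsto V(x,z)$, i.e.\ the bookkeeping of $\mu$--null sets through the local Taylor expansions and the local--to--global gluing over a countable cover. Once $V(x,\cdot)$ is known pointwise analytic, the pointwise log--subharmonicity of $|V(x,\cdot)|^q$ and its integration via Jensen and Tonelli are routine. An alternative that sidesteps the null--set bookkeeping is to run the argument first for the polynomial truncations $V^{N}(\cdot,z)=\sum_{n\le N}a_n^D(z-z_0)^n$ (which are automatically pointwise analytic in $z$), concluding that each $F^{N}$ is log--subharmonic, and then let $N\to\infty$ using that $F^{N}\to F$ uniformly on compact subsets of $S$ (again by the $|a|^q-|b|^q$ estimate and $\mu(X)<\infty$) together with the stability of subharmonicity under locally uniform limits.
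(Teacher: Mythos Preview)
Your argument is correct. Both the continuity estimate and the Jensen--Tonelli step are sound; in the latter, the splitting $\log|V(x,w_\theta)|^q=[\log|V(x,w_\theta)|^q-\log F(w_\theta)]+\log F(w_\theta)$ followed by Jensen for $e^t$ and Tonelli does exactly what you claim, and the cases $V(x,\cdot)\equiv 0$ or $F(z_0)=0$ are handled by the usual conventions.

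Your route differs from the paper's. The paper (following Stein--Weiss) performs a \emph{double} approximation: first truncate the $L^1$--valued Taylor series to get $F_N(z)=\int_X\big|\sum_{k\le N}a_{k,z_0}(x)(z-z_0)^k\big|^q d\mu$, then replace each coefficient $a_{k,z_0}$ by a simple function $a_{k,z_0}^j$, so that the integrand becomes $\sum_i |p_i(z)|^q\chi_{E_i}$ for finitely many polynomials $p_i$ and disjoint sets $E_i$. The resulting $G_N^j(z)=\sum_i|p_i(z)|^q\mu(E_i)$ is log-subharmonic by Lemma~\ref{logsubh} (closure of log-subharmonicity under finite sums), and one passes back to $F$ by two layers of uniform limits using the same $||a|^q-|b|^q|\le|a-b|^q$ estimate you use. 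Thus the paper never manufactures a.e.\ pointwise analyticity of $V(x,\cdot)$ and never invokes Jensen over $d\theta/2\pi$; instead it reduces everything to the purely finite statement that $\sum_i|p_i|^q\mu(E_i)$ is log-subharmonic.

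What each approach buys: the paper's double approximation sidesteps all null-set bookkeeping and the local-to-global gluing you flag as the main obstacle, at the cost of an extra approximation layer and reliance on Lemma~\ref{logsubh}. Your main argument is more direct and conceptually cleaner once the pointwise analyticity is in hand, but that step is genuinely where the work lies. Your closing ``alternative'' is in fact the paper's first approximation step; had you followed it with the simple-function reduction rather than reapplying Jensen to $F^N$, you would have reproduced the paper's proof verbatim.
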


\end{document}